\DeclareMathAlphabet\EuR{U}{eur}{m}{n}
\SetMathAlphabet\EuR{bold}{U}{eur}{b}{n}
\theoremstyle{plain}
\newtheorem{theorem}{Theorem}[section]
\newtheorem{lemma}[theorem]{Lemma}
\newtheorem{proposition}[theorem]{Proposition}
\newtheorem{conjecture}[theorem]{Conjecture}
\theoremstyle{definition}
\newtheorem{definition}[theorem]{Definition}
\newtheorem{example}[theorem]{Example}
\newtheorem{remark}[theorem]{Remark}
\global\let\c@equation=\c@theorem}
\newcommand{\GL}{\mathrm{GL}}
\newcommand{\SL}{\mathrm{SL}}
\newcommand{\PGL}{\mathrm{PGL}}
\newcommand{\PSL}{\mathrm{PSL}}
\newcommand{\mor}{\mathrm{mor}}
\newcommand{\Maps}{\mathrm{Maps}}
\newcommand{\Hom}{\mathrm{Hom}}
\newcommand{\uE}{\underline{E}}
\newcommand{\C}{\mathbb{C}}
\newcommand{\Q}{\mathbb{Q}}
\newcommand{\Z}{\mathbb{Z}}
\newcommand{\N}{\mathbb{N}}
\newcommand{\Ab}{\mathfrak{Ab}}
\newcommand{\fF}{\mathfrak{F}}
\newcommand{\bfb}{\mathbf{b}}
\newcommand{\bfd}{\mathbf{d}}
\newcommand{\cA}{\mathcal{A}}
\newcommand{\cH}{\mathcal{H}}
\newcommand{\cO}{\mathcal{O}}
\newcommand{\XX}{\mathcal{X}}
\newcommand{\YY}{\mathcal{Y}}
\newcommand{\ev}{\operatorname{ev}}
\newcommand{\pt}{\{\bullet\}}
\newcommand{\colim}{\operatorname{colim}}
\newcommand{\cone}{\operatorname{cone}}
\newcommand{\topo}{\operatorname{top}}
\newcommand{\ind}{\operatorname{ind}}
\newcommand{\res}{\operatorname{res}}
\newcommand{\cores}{\operatorname{cores}}
\newcommand{\xycomsquare}[8]                   
{\xymatrix
	{#1 \ar[r]^{#2} \ar[d]^{#4} &
		#3 \ar[d]^{#5}  \\
		#6\ar[r]^{#7} &
		#8
	}
}
\newcommand{\curs}{\EuR}
\newcommand{\GROUPOIDS}{\curs{GROUPOIDS}}
\newcommand{\Or}{\curs{Or}}
\newcommand{\SPACES}{\curs{SPACES}}
\newcommand{\SPECTRA}{\curs{SPECTRA}}
\title[Hecke operators on Bianchi groups]{Hecke operators in Bredon (co)homology, K-(co)homology and Bianchi groups}
\author{David Muñoz}
\address{Department of Computational Mathematics, Science and Engineering, Michigan State University, East Lansing, MI, United States}
\email{munozram@msu.edu}
\author{Jorge Plazas}
\address{Departamento de Matemáticas,  Pontificia Universidad Javeriana, Bogotá D.C, Colombia}
\email{jorge.plazas@javeriana.edu.co}
\author{Mario Vel\'asquez}
\address{Bogot\'a D.C, Colombia}
\email{mavelasquezm@gmail.com}
\urladdr{https://sites.google.com/site/mavelasquezm/}
\date{\today}
\subjclass[2010]{Primary 55N25; Secondary 58B34, 46L80, 20C08.}
\keywords{Bianchi groups, Bredon (co)homology, K-theory, Hecke operators}
\begin{document}

\maketitle

\begin{abstract}
	
	In this article we provide a framework for the study of Hecke operators acting on the Bredon (co)homology of an arithmetic discrete group. Our main interest lies in the study of Hecke operators for Bianchi groups. Using the Baum-Connes conjecture, we can transfer computations in Bredon homology to obtain a Hecke action on the $K$-theory of the reduced $C^{*}$-algebra of the group. We show the power of this method giving explicit computations for the group $\SL_2(\Z[i])$. In order to carry out these computations we use an Atiyah-Segal type spectral sequence together with the Bredon homology of the classifying space for proper actions.
\end{abstract}

\section{Introduction}

Hecke operators play a prominent role in the study of arithmetic groups. The action of Hecke operators on various cohomology theories associated to arithmetic groups and their symmetric spaces provides an essential tool bridging the analytic and arithmetic aspects of the theory. An important class of arithmetic groups that has received a lot of attention recently is that of Bianchi groups. These are groups of the form $\PSL_2(\mathcal{O}_{\Q (\sqrt{-D} )})$ where $\mathcal{O}_{\Q (\sqrt{-D} )}$ is the ring of integers of an imaginary quadratic field. Bianchi groups were first studied by Luigi Bianchi and others in the 1890's as a natural extension of the study of the Modular group $\PSL_2(\Z)$. Bianchi studied in \cite{Bianchi1892} their algebraic properties, finding generators for many of these groups and showing that each Bianchi group acts discontinuously on the hyperbolic 3-space. Bianchi also  developed the tools from reduction theory for binary Hermitian forms needed in the study of this class of groups (cf. \cite{Swan}).

For free subgroups of Bianchi groups, Mesland and \c{S}eng\"{u}n in \cite{MeslandSengun20} have recently defined a Hecke action on $K$-homology using Kasparov's bivariant $KK$-theory. We tackle the general case by first defining Hecke operators on Bredon (co)homology, allowing us to then transfer the computations to $K$-theory in full generality. Computations in Bredon (co)homology can be carried out using  Atiyah-Segal type spectral sequences. We develop the corresponding machinery which we then apply to explicitly compute the Hecke action on the $K$-homology of the group $\PSL_2(\Z[i])$.

The plan of the article is as follows. In section 2 we review the definition of Bredon modules and Bredon (co)homology. Given a discrete group $G$, Bredon modules associated to families of subgroups of $G$ provide coefficient systems for $G$-equivariant (co)homology theories. We focus in the case where the coefficient system is given by the representation ring. In this case the Bredon (co)homology of a $G$-CW-complex $X$ is given in terms of the representation rings of its cell stabilizers. In section 3 we define equivariant K-(co)homology in terms of spectra and discuss its relation to Bredon (co)homology using spectral sequences. This in turn leads via the Baum-Connes conjecture to a description of the $K$-theory of the reduced $C^{*}$-algebra of $G$ and the possibility of defining Hecke operators at the level of such algebras. In section 4 we review the theory of Hecke algebras and introduce the natural Hecke action on group cohomology. Here we also discuss Hecke correspondences over a $G$-space. In section 5 we develop the machinery necessary to define Hecke operators in Bredon (co)homology and transfer these to equivariant $K$-(co)homology. The core of our treatment lies in identifying the appropriate restriction, corestriction and conjugation morphisms necessary leading to the action of Hecke correspondences. Section 6 of the article is devoted to computations for Bianchi groups. Expressing Bianchi groups as amalgamated products, we can carry out the computations in terms of the representation theory of their factors viewed as cell stabilizers on $G$-spaces. Explicit computations in the case of Hecke operators associated to congruence subgroups of $\PSL_2(\Z[i])$ of prime level are carried out in full. We close the article in section 7 with a few concluding remarks.

\section{Bredon (Co)homology}
\subsection{Bredon (Co)homology}

Bredon (co)homology for finite groups was introduced by Bredon in \cite{Bredon67-1}, \cite{Bredon67-2} in order to provide an appropriate framework for coefficient systems in an equivariant (co)homology theory. The theory can be extended to arbitrary topological groups (cf.\cite{Illman75}). In this section we recall the main aspects of the theory for discrete groups. Our treatment follows that of \cite{Sanchez05} and  \cite{MislinValette03}. 

In order to define Hecke operators we will need to develop in full generality the theory underlying Bredon's definition of cohomology with coefficients. Given a discrete group $G$, its Bredon cohomology groups with coefficients in the representation ring will be computed via a cochain complex where each term is the representation ring of the stabilizer of an $n$-cell of the classifying space for proper actions of $G$ viewed as a $G-CW$-complex. We will develop the necessary machinery leading to this description in this section and the next one. This section will be devoted to the description of the cohomology with coefficients for a  $G-CW$-complex. In the next section we recast the discussion in terms of spectra leading to the possibility of carrying out the computations using the classifying space for proper $G$-actions.

We begin by defining the orbit category which is central to  Bredon's definition of (co)homological invariants for spaces  with  a group action. 

\begin{definition}Let $G$ be a discrete group and let $\fF$ be a family of subgroups of $G$, closed under conjugation and finite intersections. Define the {\emph{orbit category} }  $\Or_{\fF}(G)$ as the category whose objects are sets of the form $G/H$ with  $H\in \fF$, and whose  morphisms are given by $G$-maps. Notice that such a morphism  $f_{g}: G/H\rightarrow G/K$ is determined by an element $gK\in G/K$ with $g^{-1}H g\subset K$, so that it sends the coset $H$ to the coset $gK$, i.e. we have an identification
\begin{eqnarray*}
\mor_{\Or_{\fF}(G)} (G/H,G/K) &=&  \Maps(G/H,G/K)^G.
\end{eqnarray*}
When $\fF$ is the family of all subgroups of $G$ we simply denote $\Or_{\fF}(G)$ by $\Or_{}(G)$.
\end{definition}

Throughout what follows we fix a choice of a family $\fF$ of subgroups of $G$ as above. 

Denote by $\Ab$ the category of abelian groups.

\begin{definition}A covariant (resp. contravariant) Bredon module is a covariant (resp. contravariant) functor
\begin{eqnarray*}
M :	\Or_{\fF}(G) & \longrightarrow &  \Ab.
\end{eqnarray*}
A morphism 
\begin{eqnarray*}
	\Psi : M  & \longrightarrow &  N
\end{eqnarray*}
between Bredon modules is given by a natural transformation between the corresponding functors. This means that for each $H\in\mathfrak{F}$ there is a morphism of abelian groups
\begin{eqnarray*}
	\Psi(G/H) :  M(G/H)  & \longrightarrow &  N(G/H)
\end{eqnarray*}
and for every $f_g : G/H \to G/K$ we have, in the covariant case, a commutative diagram
$$\xymatrix{M(G/H)\ar[r]^{M(f_g)}\ar[d]_{\Psi(G/H)}& M(G/K)\ar[d]^{\Psi(G/K)}\\N(G/H)\ar[r]^{N(f_g)}&N(G/K)}$$
whilst in the cotravariant case the horizontal arrows are reversed. 
\end{definition}

If $M$ and $N$ are both covariant (resp. contravariant) Bredon modules, the group structure in each of the $\Hom(M(G/H), N(G/H))$ induces an abelian group structure in the set of na\-tu\-ral transformations $\mor(M, N)$. It can be shown moreover that the category of covariant (resp. contravariant) Bredon modules is an Abelian category.

If $M$ is a contravariant Bredon module and $N$ is a covariant Bredon module, we define the abelian group 
\begin{eqnarray*}
	M\otimes_\mathfrak{F} N  & =  &  \bigoplus_{H\in\mathfrak{F}} M(G/H)\otimes_{\Z} N(G/H) \;\bigg/ \sim
\end{eqnarray*}
where the relation $\sim$ is generated by $M(f)(m)\otimes n-m\otimes N(f)(n)$ for each $f:G/H\rightarrow G/K$, $m\in M(G/K)$, and $n\in N(G/H)$.

Given a CW-complex $Z$ we denote by $C_{\ast}(Z)$ its cellular chain complex. Let $X$ be $G$-CW-complex, for each $n \geq 0$ we can define a contravariant Bredon module $\underline{C_n(X)}$ by 
\begin{eqnarray*}
\underline{C_n(X)} : G/H & \longmapsto & C_n(X^H),
\end{eqnarray*}
where $X^H$ is the subspace of $X$ fixed by the subgroup $H$. Let $\{ \delta_{\alpha}\}$ be the set of $n$-cells of $X$, then there is an isomorphism
\begin{eqnarray*}
	C_n(X^H) & \cong & \bigoplus_{\alpha}\Z[ \delta_{\alpha}^H],
\end{eqnarray*}
where $ \delta_{\alpha}^H$ is the $H$-fixed point set of $\delta_\alpha$; explicitly, $\delta_{\alpha}^H$ is $\delta_\alpha$ if the cell is fixed by $H$, and otherwise is empty, in which case it does not count in the sum. For a morphism $f_{g}:G/H\rightarrow G/K$ we have 
$$\underline{f_{g}}:=\underline{C_n{(X)}}(f_{g}):C_n(X^K)\rightarrow C_n(X^H), \quad  \delta_{\alpha}^K\longmapsto g\cdot  \delta_{\alpha}^K=: \delta_{\alpha_g}^H.$$

For each $H\in\mathfrak{F}$  the usual boundary map $\partial:C_n(X^H)\rightarrow C_{n-1}(X^H)$ induces a boundary map 
\begin{eqnarray*}
\partial:\underline{C_n(X)} & \longrightarrow & \underline{C_{n-1}(X)}.
\end{eqnarray*}

If $M$ is a contravariant Bredon module and $X$ is a  $G$-CW-complex we obtain a cochain complex
$$\text{mor}(\underline{C_{\ast}(X)},M).$$ 
\begin{definition}
Let $X$ be a $G$-CW-complex and let $M$ be a contravariant Bredon module. We define the $n$-th Bredon cohomology group of $X$ with coefficients in $M$ as
\begin{eqnarray*}
\mathcal{H}_{G}^{n}(X;M) & = & H^n(\mbox{mor}(\underline{C_{\ast}(X)},M)).
\end{eqnarray*}
\end{definition}

Analogously, if $N$ is a covariant Bredon module and $X$ is a  $G$-CW-complex we obtain a chain complex
$$ \underline{C_{\ast}(X)}\otimes_\mathfrak{F} N . $$
\begin{definition}
Let $X$ be a $G$-CW-complex and let $N$ be a covariant Bredon module. We define the $n$-th Bredon homology group of $X$ with coefficients in $N$ as
\begin{eqnarray*}
\mathcal{H}_{n}^{G}(X;N) & = &H_n(\underline{C_{\ast}(X)}\otimes_\mathfrak{F} N).
\end{eqnarray*}
\end{definition}

As mentioned above contravariant Bredon modules form an Abelian category. We will now define a class of projective objects which will play an important role in computations. 

Let $K\in\mathfrak{F}$. We define the standard projective contravariant Bredon module $P_K$ as the functor given in objects of $\Or_{\fF}(G)$ by
\begin{eqnarray*}
P_K (G/H) & = & \Z [\mbox{mor}(G/H,G/K)],\qquad \mbox{for }H\in\mathfrak{F},
\end{eqnarray*}
and which associated to a morphism $f:G/H_1\rightarrow G/H_2$, the morphism $P_K(f):P_K(G/H_2)\rightarrow P_K(G/H_1)$ is given by the linear extension of pre-composing with $f$.

For the Bredon modules $P_K$, an appropriate form of the Yoneda Lemma  shows that given a contravariant Bredon module $M$ there is an induced isomorphism of Abelian groups
$$\ev_K:\mbox{mor}(P_K,M) \longrightarrow M(G/K), \qquad \varphi\longmapsto \ev_K(\varphi)=\varphi(G/K)(1).$$

In a similar manner, if $N$ is a covariant Bredon module, there is an isomorphism 
$$P_K\otimes_{\mathfrak{F}}N\cong N(G/K).$$ 
See \cite{MislinValette03} for more information on these isomorphisms.

Let $X$ be a $G$-CW-complex and, as above, let $\{\delta_{\alpha}\}$ be the set of $n$-cells of $X$, and let $\{e_{\beta}\}$ be a set of $G$-representatives of those $n$-cells; we know that 
$$C_n(X^H) \; \cong \;  \bigoplus_{\alpha}\Z[\delta_{\alpha}^H] \;  \cong \;  \bigoplus_{\beta}\Z[(G\cdot e_{\beta})^H].$$
Moreover, if $S_{\beta}$ is the stabilizer of the cell $e_{\beta}$, and the $g$'s are taken as representatives in $G/S_{\beta}$ then there is a $g e_{\beta}$ fixed by $H$ if and only if $g$ is such that $g^{-1}Hg\subset S_{\beta}$, so we have a bijective correspondence 
\begin{eqnarray*}
(G\cdot e_{\beta})^H & = & \mbox{mor}(G/H,G/S_{\beta}).
\end{eqnarray*}
Therefore, we obtain 
$$C_n(X^H) \; \cong \;  \bigoplus_{\beta}\Z[\mbox{mor}(G/H,G/S_{\beta})] \; = \;  \bigoplus_{\beta} P_{S_{\beta}}(G/H),$$
so, as Bredon modules,
\begin{eqnarray*}
	\underline{C_{n}(X)} & \cong & \bigoplus_{\beta}P_{S_{\beta}}.
\end{eqnarray*}

We have an isomorphism of chain complexes 
$$\mbox{mor}_{G}(\underline{C_{\ast}(X)},M)  \, \cong \,  \prod_{\beta^\ast} \mbox{mor}(P_{S_{\beta^\ast}},M) \, \cong \,  \prod_{\beta^\ast} M(G/S_{\beta^\ast}),$$
where $\{\beta^\ast\}$ indexes the $G$-representatives of $\ast$-cells. This becomes a direct sum assuming there are finite representatives for the cells.

\subsection{Coefficients in the representation ring}

Consider now the family $\fF$ of finite subgroups of $G$. For computations of equivariant K-theory and K-homology we will use the contravariant Bredon module $\mathcal{R}$ which acts on objects of $\Or_{\fF}(G)$ by  sending $G/H$ to $R(H)$, the rep\-re\-sen\-ta\-tion ring of the subgroup $H$. At the level of morphisms,  $\mathcal{R}$ acts via the composition of restriction and the isomorphism given by conjugation, so for any $f_{g}:G/H\rightarrow G/L$ the morphism $\mathcal{R}(f_{g})$ is the composition 
$$R(L)\xrightarrow{\;\mbox{Res}_{g^{-1}Hg}^L\;}R(g^{-1}Hg)\xrightarrow{\;\cong\;}R(H).$$
Then, as above, we have an isomorphism describing the Bredon cochain complex
\begin{eqnarray}\label{bredonc}
\mbox{mor}_{G}(\underline{C_{n}(X)},\mathcal{R}) & \cong &\bigoplus_{\alpha}R(S_{\alpha}),
\end{eqnarray}
with the assumption that there are finite orbit representatives of $n$-cells. Here, the coboundary map is given by restriction of representations, from the stabilizer of an $n$-cell to the stabilizer of the corresponding $(n+1)$-cell that contains it.

Similarly, we can consider $\mathcal{R}$ as a covariant Bredon module, setting $\mathcal{R}(f_g)$ to be the composition 
$$R(H)\xrightarrow{\;\cong\;}R(g^{-1}Hg)\xrightarrow{\;\mbox{Ind}_{g^{-1}Hg}^L\;}R(L).$$
Then the chain complex $\underline{C_{n}(X)}\otimes_{\mathfrak{F}}\mathcal{R} $ can be described as 
\begin{eqnarray}
\underline{C_{n}(X)}\otimes_{\mathfrak{F}}\mathcal{R} & \cong & \bigoplus_{\alpha}R(S_{\alpha}),
\end{eqnarray} 
where the boundary map is given by induction of representations.

\begin{remark}
	Once we extend scalars to $\C$ we obtain an isomorphism
	\begin{eqnarray*}
		\left[ \; \bigoplus_{\alpha}R(S_{\alpha}) \,  \right] \otimes_{\Z} \C & \cong & \bigoplus_{\alpha} \mathfrak{Cl}_{c}(S_{\alpha})
	\end{eqnarray*}
	where $\mathfrak{Cl}_{c}(S_{\alpha})$ is the vector space of locally constant complex valued class functions on $S_{\alpha}$, and the individual isomorphisms $R(S_{\alpha}) \otimes_{\Z} \C \cong \mathfrak{Cl}_{c}(S_{\alpha})$ are given by passing from representations to their characters. The spaces
	\begin{eqnarray*}
		\mathrm{C}_{\; n}^{\, \mathrm{ch}}(G ; X)  & = &\bigoplus_{\alpha} \mathfrak{Cl}_{c}(S_{\alpha})
	\end{eqnarray*}
	form a chain complex whose boundary maps are similarly defined by induction of representations. This chain complex computes the {\emph{chamber homology groups}} $\mathrm{H}_{\; n}^{\mathrm{ch}}(G ; X)$. Chamber homology is an equivariant homology theory which, as Bredon homology, incorporates the structure coming from the representation theory of subgroups of $G$. We refer the reader to \cite{BaumEtAl2000} for a treatment of chamber homology together with its relevance in the context of the Baum-Connes conjecture for $p$-adic groups.
	
\end{remark}

\section{Equivariant K-homology and the Baum-Connes conjecture}\label{K-homology}
Now we will describe a topological version of the Baum-Connes assembly map, from this approach we easily obtain the naturality of the assembly map. It will be necessary for the computation in Section \ref{comp}. The proofs of all results in this section can be found in \cite{Lueck19}.

\subsection{Spectra and homology theories}
\begin{definition}
A spectrum $${\bf E} = \{(E(n), \sigma(n)) \mid n \in \Z\}$$ is a sequence of pointed spaces
$$\{E(n)\mid n \in \Z\}$$ together with pointed maps, called structure maps, 
\begin{eqnarray*}
\sigma(n):E(n)\wedge S^1 & \longrightarrow & E(n + 1).
\end{eqnarray*}
A map of spectra $f:{\bf E}\to {\bf E'}$
is a sequence of maps $f(n):E(n)\to E'(n)$
which are compatible with the structure maps $\sigma(n)$. The homotopy groups of a spectrum are defined by
	\begin{eqnarray*}
	\pi_n({\bf E}) & := & \colim_{k\to\infty} \pi_{n+k}(E(k)).
	\end{eqnarray*}
Where the $n$-th structure map of the system $\pi_{n+k}(E(k))$ is given by the composite
$$\pi_{n+k}(E(k))\xrightarrow{S}\pi_{n+k+1}(E(k)\wedge S^1)\xrightarrow{\sigma(k)_*}\pi_{n+k+1}(E(k+1)),$$
where $S$ denotes the suspension homomorphism.
\end{definition}

We will denote by $\SPECTRA$ the category of spectra. 

If ${\bf E}$ is a spectrum, one obtains a (non-equivariant) homology theory $H_*(-; {\bf E})$
by defining for a CW-pair $(X,A)$ and $n\in \Z$ 
\begin{eqnarray*}
H_n(X,A; {\bf E}) & = & \pi_n (X_+ \cup_{A_+}\cone(A_+) \wedge {\bf E}),
\end{eqnarray*}
where $X_+$ is obtained from $X$ by adding a disjoint
base point and $\cone$ denotes the (reduced) mapping cone. The main property of this homology theory is given by the equality $H_n(\pt; {\bf E}) = \pi_n({\bf E})$. The definitions can be extended to an equivariant context using the orbit category.

\subsection{$\Or(G)$-spaces}

Denote by $\SPACES$ the category of topological spaces. Also, recall that if $\fF$ is the family of all subgroups of $G$ we denote $\Or_{\fF}$ by $\Or(G)$. In order to ease notation we will at times use lower case letters to denote objects in $\Or(G)$. 
A covariant $\Or(G)$-space is a covariant functor 
\begin{eqnarray*}
\mathcal{X} : \Or(G) &\longrightarrow &\SPACES.
\end{eqnarray*}

Given  a $G$-space $X$, the {\it fixed point set system} of $X$, denoted by $\Phi X$, is
the $\Or(G)$-space defined by
\begin{eqnarray*}
\Phi X (G/H) & := & {\rm{Maps}}(G/H, X)^G \; = \; X^H
\end{eqnarray*}
and if $\theta: G/H \to G/K$ is a $G$-map corresponding to $gK \in (G/K)^H$ then when $x\in X^K$
\begin{eqnarray*}
\Phi X (\theta)(x) &:=& gx \in X^H.
\end{eqnarray*}
$\Phi$ defines a contravariant functor from
the category of proper $G$-spaces to the category of
$\Or(G)$-spaces.

Let $\XX$  and $\YY$ be $\Or(G)$-spaces, we define the space
\begin{eqnarray}
\XX\times_{\Or(G)}\YY & := & \bigsqcup_{c \in {\rm{Obj}}(\Or(G))} \XX(c) \times \YY(c) / \sim
\end{eqnarray}
where $\sim$ is the equivalence relation generated by $(\XX(\phi)(x),y) \sim (x, \YY(\phi)(y))$
for all morphisms $\phi: c \to d$ in $\Or(G)$ and points $x \in \XX(d)$ and $y \in \YY(c)$.

\begin{definition}
A covariant $\Or(G)$-spectrum is a covariant functor
\begin{eqnarray*}
{\bf E}^G : \Or(G) & \longrightarrow & \SPECTRA
\end{eqnarray*}	
\end{definition} 
If ${\bf E}^G$ is a covariant $\Or(G)$-spectrum and $\YY$ is a $\Or(G)$-space, then one obtains a spectrum $Y\times_{\Or(G)}{\bf E}^G$. Hence, we can extend ${\bf E}^G$ to a covariant functor
\begin{align*}{\bf E}^G_\%:G\text{-CW}^2&\to \SPECTRA\\ (X, A)&\mapsto \Phi(X_+\cup_{A_+} \cone(A_+))\times_{\Or(G)}{\bf E}^G.\end{align*}

\begin{lemma}
If ${\bf E}^G$ is a covariant $\Or(G)$-spectrum, then we obtain a $G$-homology theory $H^G_*(-;{\bf E}^G)$ defined as
\begin{eqnarray*}
H^G_n(X,A;{\bf E}^G) & = & \pi_n({\bf E}^G_\%(X,A))
\end{eqnarray*}	
satisfying $H^G_n(G/H;{\bf E}^G) = \pi_n({\bf E}^G(G/H))$ for $n\in \Z$ and $H\subseteq G$.
\end{lemma}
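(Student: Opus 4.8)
The plan is to verify directly that $H^G_*(-;{\bf E}^G)$ satisfies the Eilenberg--Steenrod axioms for a $G$-homology theory on the category of $G$-CW-pairs, and then to compute its value on an orbit $G/H$ by a density (co-Yoneda) argument. Everything rests on the homotopical behaviour of the pointed balanced product $\YY\mapsto\YY\times_{\Or(G)}{\bf E}^G$ applied to the contravariant $\Or(G)$-spaces $\YY=\Phi\big(X_+\cup_{A_+}\cone(A_+)\big)$, so I would first record two structural facts. First, for every $G$-CW-pair $(X,A)$ the object $\Phi\big(X_+\cup_{A_+}\cone(A_+)\big)$ is a pointed contravariant $\Or(G)$-CW-complex, i.e. is built out of cells of the form $\mor_{\Or(G)}(-,G/H)\times D^n$: the functor $\Phi$ sends a free $G$-cell $G/H\times D^n$ to $\mor_{\Or(G)}(-,G/H)\times D^n$ --- this is precisely the bijection $(G\cdot e)^K=\mor_{\Or(G)}(G/K,G/H)$ used earlier in the paper --- and taking fixed-point sets commutes with the pushouts and the cone building $X_+\cup_{A_+}\cone(A_+)$ because all attaching maps are cellular cofibrations; in particular $\Phi\big(X_+\cup_{A_+}\cone(A_+)\big)$ is the pointed mapping cone, in $\Or(G)$-CW-complexes, of $\Phi(A_+)\to\Phi(X_+)$. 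Second, on $\Or(G)$-CW-complexes the functor $-\times_{\Or(G)}{\bf E}^G$ to $\SPECTRA$ has the properties one expects of a balanced product with a ``free'' variable: (i) it sends $\Or(G)$-homotopy equivalences to homotopy equivalences of spectra; (ii) it sends a pointed mapping cone of $\Or(G)$-CW-complexes to a cofibre sequence of spectra; (iii) it sends coproducts to wedges; and (iv) it sends $\mor_{\Or(G)}(-,G/H)_+\wedge S^n$ to $\Sigma^n{\bf E}^G(G/H)$, in particular $\mor_{\Or(G)}(-,G/H)_+$ to ${\bf E}^G(G/H)$. Property (iv) is the co-Yoneda isomorphism for the coend defining $\times_{\Or(G)}$; properties (i)--(iii) I would prove by induction over the skeletal filtration, reducing to the single-cell computation in (iv).

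With these in hand the axioms follow formally. \emph{$G$-homotopy invariance}: a $G$-homotopy equivalence of $G$-CW-pairs becomes, under $\Phi$, an $\Or(G)$-homotopy equivalence of the associated $\Or(G)$-CW-complexes, so (i) and $\pi_*$ give the claim. \emph{Exact sequence of a pair}: the cofibre sequence $A_+\to X_+\to X_+\cup_{A_+}\cone(A_+)$ goes, via $\Phi$ and the first structural fact, to a pointed mapping cone of $\Or(G)$-CW-complexes, hence via (ii) to a cofibre sequence of spectra ${\bf E}^G_\%(A)\to{\bf E}^G_\%(X)\to{\bf E}^G_\%(X,A)$; taking homotopy groups yields the long exact sequence, the connecting homomorphism being natural because the entire construction is functorial. \emph{Excision} is automatic: ${\bf E}^G_\%(X,A)$ depends only on the pointed $G$-CW-complex $X/A$, so for a $G$-CW-triad the excision map is an identity. \emph{Disjoint union}: by (iii) together with the fact that $\pi_*$ of a wedge of spectra is the direct sum of the $\pi_*$'s (a filtered colimit of finite wedges); the same colimit argument over the skeletal filtration shows $H^G_*$ of any $G$-CW-complex is the colimit of its values on skeleta, so it suffices to check the axioms for finite pairs.

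For the value on an orbit, take $A=\emptyset$: then $\cone(\emptyset_+)$ is a point and $X_+\cup_{\emptyset_+}\cone(\emptyset_+)=X_+$, so $H^G_n(G/H;{\bf E}^G)=\pi_n\big(\Phi(G/H_+)\times_{\Or(G)}{\bf E}^G\big)$. By the definition of $\Phi$ one has $\Phi(G/H)(G/K)=(G/H)^K=\mor_{\Or(G)}(G/K,G/H)$, i.e. $\Phi(G/H)$ is the contravariant functor represented by $G/H$, so $\Phi(G/H_+)=\mor_{\Or(G)}(-,G/H)_+$; by (iv) the spectrum $\Phi(G/H_+)\times_{\Or(G)}{\bf E}^G$ is ${\bf E}^G(G/H)$, whence $H^G_n(G/H;{\bf E}^G)=\pi_n({\bf E}^G(G/H))$.

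The genuine work, and the step I expect to be the main obstacle, is establishing (i)--(iii): that $-\times_{\Or(G)}{\bf E}^G$ is homotopy invariant and exact when restricted to $\Or(G)$-CW-complexes. This is the analogue of ``tensoring with a free module is exact'', but here one must be careful about basepoints, about whether the point-set coend already computes the homotopy-invariant balanced product on free cells (it does, but this needs argument), and about how the skeletal filtration of $\Phi\big(X_+\cup_{A_+}\cone(A_+)\big)$ interacts with $\times_{\Or(G)}$. Once these are settled, the remaining verifications are formal consequences of the spectrum-level cofibre sequences and the co-Yoneda lemma; a detailed treatment is given in \cite{Lueck19}.
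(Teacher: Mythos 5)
Your outline is correct and is essentially the argument behind the result the paper invokes: the paper itself gives no proof, deferring to Lemmas 2.3 and 2.5 of L\"uck's survey, and those lemmas are proved exactly as you describe (cell-by-cell induction showing $-\times_{\Or(G)}{\bf E}^G$ is homotopy invariant, exact and compatible with coproducts on $\Or(G)$-CW-complexes, plus the co-Yoneda identification $\mor_{\Or(G)}(-,G/H)\times_{\Or(G)}{\bf E}^G\cong{\bf E}^G(G/H)$ for the orbit computation). So your proposal matches the cited proof in approach, with the technical steps you flag as the ``genuine work'' being precisely what the reference carries out.
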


\begin{proof}
	See Lemmas 2.3 and 2.5 in \cite{Lueck19}.
\end{proof}

Recall that the reduced group $C^{*}$-algebra $C_r^*(G)$ is the norm closure of the complex group ring $\C G$ embedded into the space $\mathcal{B}(L^2(G))$ of bounded operators $L^2(G)\to L^2(G)$ equipped with the sup norm given by the right regular representation. Denote by $\GROUPOIDS$ the category of groupoids. There is a covariant functor,
respecting equivalences,
\begin{eqnarray*}
{\bf K}^{\topo} : \GROUPOIDS \longrightarrow \SPECTRA,
\end{eqnarray*}
such that for every group $G$ and all $n\in \Z$ we have
\begin{eqnarray*}
\pi_n({\bf K}^{\topo}(G)) & \cong & K_n(C_r^*(G)),
\end{eqnarray*}
where $K_n(C_r^*(G))$ is the topological K-theory of the reduced group $C^{*}$-algebra $C_r^*(G)$, see \cite{Blackadar06}.

\begin{conjecture}[Baum-Connes Conjecture] A group $G$ satisfies the Baum-Connes Conjecture if the assembly map induced by the projection $pr:\underbar{E}G\to G/G$ 
\begin{eqnarray*}
H^G_n(pr;{\bf K}^{\topo}) :  H^G_n(\underbar{E}G; {\bf K}^{\topo}) & \longrightarrow & H^G_n(G/G;{\bf K}^{\topo}) = K_n(C^*_r(G)),
\end{eqnarray*}
is bijective for all $n\in\Z$.
\end{conjecture}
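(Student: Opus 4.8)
\emph{Remark on the nature of the statement.} The final statement is the Baum--Connes Conjecture, which is open for general $G$; so a ``proof'' here should be read as (a) an account of the mechanism by which one establishes bijectivity of the assembly map $H^G_n(pr;{\bf K}^{\topo})$ for a given $G$, together with (b) a verification that the groups relevant to this article --- Bianchi groups and their congruence subgroups --- lie in a class for which the conjecture is a theorem. I would organize everything around Kasparov's bivariant $KK$-theory and the Dirac--dual-Dirac method, since that is what turns the abstract assembly map of the preceding lemma into something computable.

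For (a) the plan is as follows. First, replace the $\Or(G)$-spectrum picture by Kasparov's analytic model: identify $H^G_*(\underline{E}G;{\bf K}^{\topo})$ with the equivariant $K$-homology $KK^G_*(C_0(\underline{E}G),\C)=\colim_{X}KK^G_*(C_0(X),\C)$, the colimit taken over $G$-compact $X\subseteq\underline{E}G$, so that the assembly map becomes $\mu\colon KK^G_*(C_0(\underline{E}G),\C)\to K_*(C^*_r(G))$, $\mu(\beta)=j^{\mathrm{red}}_G(\beta)\cdot[\lambda]$ where $[\lambda]$ is the class of the regular representation. Next, one constructs a \emph{Dirac element} $D\in KK^G(C_0(\underline{E}G),\C)$ --- for instance the class of a $G$-invariant Dirac-type operator when $\underline{E}G$ is a complete Riemannian manifold of nonpositive curvature --- and a \emph{dual-Dirac element} $\eta\in KK^G(\C,C_0(\underline{E}G))$, and one forms Kasparov's element $\gamma:=\eta\cdot D\in KK^G(\C,\C)$. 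A formal functoriality argument then shows that the existence of the triple $(D,\eta,\gamma)$ makes $\mu$ split injective, and that $\mu$ is an isomorphism as soon as $\gamma=1$ in $KK^G(\C,\C)$. The problem is thereby reduced to the homotopy statement $\gamma=1$, which one proves by exhibiting an explicit $G$-homotopy of Kasparov cycles built from a geometric contraction of $\underline{E}G$ to a point at the level of operators.

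For (b): every Bianchi group $\Gamma=\PSL_2(\mathcal{O}_{\Q(\sqrt{-D})})$, and every finite-index (in particular every congruence) subgroup of it, is a discrete subgroup of $\PSL_2(\C)$, which acts properly and isometrically on hyperbolic $3$-space $\Hi^3$; hence $\Hi^3$ is a model for $\underline{E}\Gamma$ and supplies exactly the nonpositively curved geometry the Dirac--dual-Dirac construction needs. Moreover $\PSL_2(\C)$ is, up to finite covers, $\mathrm{SO}(3,1)$, so it has the Haagerup approximation property, and therefore $\Gamma$ is a-T-menable; by the Higson--Kasparov theorem every a-T-menable group satisfies the Baum--Connes conjecture (indeed with arbitrary coefficients), which is the statement in the form needed for Section~\ref{comp}. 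An alternative route, better adapted to the amalgam decompositions exploited later in the paper, is to combine the Baum--Connes property of the vertex groups in the splitting of $\Gamma$ as a graph of groups with the stability of the conjecture under amalgamated products and HNN extensions (Oyono-Oyono).

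The genuinely hard step is the one with no general solution, namely proving $\gamma=1$: whenever $\underline{E}G$ carries enough nonpositive curvature --- or, more generally, whenever $G$ admits a proper affine isometric action on a Hilbert space --- this can be carried out, but for groups with Kazhdan's property (T) one expects $\gamma\neq 1$ in ordinary $KK$-theory, and removing that obstruction requires passing to Lafforgue's Banach $KK$-theory, while for fully general $G$ no argument is known --- which is precisely why the statement remains a conjecture. For the present paper this obstacle never arises: the Haagerup property of $\PSL_2(\C)$ places all the groups under consideration within the scope of the Higson--Kasparov theorem, so the assembly map is an isomorphism for every group to which we shall apply it.
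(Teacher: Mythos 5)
The statement you were asked about is not a theorem of the paper at all: it is the statement of the Baum--Connes Conjecture, which in this formulation is really the \emph{definition} of what it means for a group $G$ to satisfy the conjecture, and the paper offers (and needs) no proof of it. You correctly recognized this, and what you supplied instead is accurate and genuinely useful: it is precisely the justification that the paper leaves implicit when, in Theorem \ref{main}, it asserts that $\Gamma_1=\PSL_2(\Z[i])$ satisfies Baum--Connes. Your route --- Bianchi groups and their finite-index subgroups are lattices in $\PSL_2(\C)$ acting properly isometrically on $\Hi_3$, which serves as $\underline{E}\Gamma$; $\PSL_2(\C)$ has the Haagerup property, hence these lattices are a-T-menable and the Higson--Kasparov theorem gives the conjecture (even with coefficients) --- is the standard and correct argument, and your sketch of the Dirac--dual-Dirac mechanism (split injectivity from the existence of $\gamma$, surjectivity from $\gamma=1$, obstruction for property (T) groups, Lafforgue's Banach $KK$ as the way around it) is an accurate account of why the general statement remains open. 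Two small points: the paper's assembly map is set up in the Davis--L\"uck spectrum language, so your identification with Kasparov's analytic model is an extra (standard, but citable) comparison step rather than a tautology; and your alternative route via amalgam decompositions and Oyono-Oyono's permanence results does not directly cover the case $D=3$, where the Bianchi group is not an amalgam, whereas the Haagerup argument covers all cases uniformly.
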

The homology theory $H_*^G(-;{\bf K}^{\topo})$ is called \emph{$G$-equivariant K-homology} and is also denoted by $K_n^G(-)$. The map $H^G_n(pr;{\bf K}^{\topo})$ is called the \emph{assembly map} and is also denoted by $\mu_{G,n}$.

\subsection{K-theory}

The equivariant cohomology theory associated to the $\Or(G)$-spectra ${\bf K}^{\topo}$ is called \emph{equivariant K-theory} and is denoted by $K_G^n(-)$. By results of L\"{u}ck and Oliver in \cite{LueckOliver01-1} we know that in the case of proper $G$-actions on $G$-CW-complexes, $K_G^*(-)$ can be defined in terms of vector bundles. We summarize the construction in the following lines.

\begin{definition}
For any discrete group $G$ and any finite proper $G$-CW-complex $X$, let $\mathbb{K}_G(X)=\mathbb{K}_G^0(X)$ be the Grothendieck group of the semigroup $\text{Vect}_G(X)$ of isomorphism classes of $G$-vector bundles over $X$ together with direct sum. Define $\mathbb{K}_G^{-n}(X)$, for all $n>0$, by setting 
$$\mathbb{K}_G^{-n}(X) = \ker(\, \mathbb{K}_G(X\times \mathbb{S}^n) \xrightarrow{\;\;\emph{incl}^\ast\;\;} \mathbb{K}_G(X) \,).$$
For any proper $G$-CW-pair $(X,A)$, and $n\geq0$, set 
$$\mathbb{K}_G^{-n}(X,A) = \ker(\, \mathbb{K}_G^{-n}(X\cup_A X) \xrightarrow{\;\;i_2^\ast\;\;} \mathbb{K}_G^{-n}(X) \,).$$
And, let $\mathbb{K}_G^n(X)=\mathbb{ K}_G^{-n}(X)$ and $\mathbb{ K}_G^n(X,A)=\mathbb{ K}_G^{-n}(X,A)$.
\end{definition}
We have then the following equivalence of equivariant homology theories.
\begin{theorem}
Let $G$ be a discrete group, $(X,A)$ be a proper $G$-CW-pair and $n$ be an integer number. There is a natural isomorphism
\begin{eqnarray*}
K_G^n(X,A) & \cong & \mathbb{ K}_G^n(X,A).
\end{eqnarray*}
\end{theorem}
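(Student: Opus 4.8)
The plan is to establish the isomorphism by showing that the equivariant cohomology theory $K_G^*(-)$ associated to the $\Or(G)$-spectrum ${\bf K}^{\topo}$ and the vector-bundle-defined theory $\mathbb{K}_G^*(-)$ agree, first on orbits $G/H$ with $H$ finite, and then extend the comparison to all finite proper $G$-CW-pairs by a cell-induction / Mayer--Vietoris argument. Concretely, both $(X,A)\mapsto K_G^*(X,A)$ and $(X,A)\mapsto \mathbb{K}_G^*(X,A)$ are $G$-equivariant cohomology theories on the category of finite proper $G$-CW-pairs: for the spectrum side this is the cohomological analogue of the Lemma quoted above (satisfying $K_G^n(G/H)=\pi_{-n}({\bf K}^{\topo}(G/H))$), while for the bundle side one needs to check the Eilenberg--Steenrod axioms — homotopy invariance, the long exact sequence of a pair, and excision — which are exactly the content of the results of L\"uck and Oliver in \cite{LueckOliver01-1}; I would cite those rather than reprove them.

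The key computation is the agreement on coefficients: for every finite subgroup $H\subseteq G$ one has, by definition of ${\bf K}^{\topo}$ via groupoids, $\pi_n({\bf K}^{\topo}(G/H))\cong \pi_n({\bf K}^{\topo}(H))\cong K_n(C_r^*(H))$, and since $H$ is finite $C_r^*(H)\cong \C[H]$, so $K_0(C_r^*(H))\cong R(H)$ and $K_1(C_r^*(H))=0$. On the other hand $\mathbb{K}_G^0(G/H)$ is the Grothendieck group of $G$-vector bundles over $G/H$, which is the category of $H$-representations, hence also $R(H)$, with $\mathbb{K}_G^{-1}(G/H)=0$ by Bott periodicity and the same representation-ring computation. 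Thus there is an isomorphism of the two theories on all orbits, and one must check it is compatible with the structure maps (restriction along $G$-maps $G/H\to G/K$ corresponds on both sides to restriction of representations); this is the naturality statement one gets from the functoriality of ${\bf K}^{\topo}$ on $\GROUPOIDS$ together with the explicit identification of L\"uck--Oliver.

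With the two theories identified on orbits and both satisfying the axioms of a $G$-cohomology theory on finite proper $G$-CW-pairs, a standard argument produces a natural isomorphism: build the natural transformation $K_G^*(-)\to \mathbb{K}_G^*(-)$ (or its inverse) by comparing the two Atiyah--Hirzebruch/Bredon spectral sequences, or more elementarily by induction on the number of cells of $X\setminus A$, using that $X$ is obtained from $A$ by attaching finitely many cells of the form $G/H\times D^n$, the five lemma applied to the long exact sequences of the pairs, and excision to reduce attaching a cell to the coefficient case. The main obstacle is the bookkeeping needed to produce the natural transformation of cohomology theories with the correct compatibilities — one cannot simply quote "a theory determined by its coefficients" for cohomology theories in the equivariant setting without care, so the cleanest route is to invoke the comparison theorem for $G$-(co)homology theories (as in \cite{Lueck19}) after checking the axioms; once that machinery is in place the isomorphism is forced by the coefficient computation above. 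Finiteness of $X$ is used to pass freely between products and sums and to keep the induction well-founded.
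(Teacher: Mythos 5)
The paper itself offers no argument here: its ``proof'' is a citation to L\"uck--Oliver \cite{LueckOliver01-1}, where this identification is a substantial theorem. Measured against that, the routine parts of your outline are fine — the coefficient computation on orbits ($\mathbb{K}_G^0(G/H)\cong R(H)$ via fibers of $G$-bundles over $G/H$, and $\pi_*({\bf K}^{\topo}(G/H))\cong K_*(C_r^*(H))\cong R(H),0$ for $H$ finite, with restriction maps matching on both sides), the verification that both functors satisfy the axioms of a proper equivariant cohomology theory, and the cell-induction/five-lemma/excision argument showing that a natural transformation which is an isomorphism on all orbits $G/H$ with $H$ finite is an isomorphism on all finite proper $G$-CW-pairs.

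The genuine gap is the step you label the ``main obstacle'' and then pass over: producing the natural transformation $K_G^*(-)\to\mathbb{K}_G^*(-)$ at all. Neither of your suggested mechanisms does this. Comparing the two Atiyah--Hirzebruch/Bredon spectral sequences presupposes a map of theories inducing the comparison (and in any case only yields isomorphisms of associated graded pieces, not a natural isomorphism of theories), and induction on cells only propagates an isomorphism along a transformation that already exists; two equivariant cohomology theories with abstractly isomorphic coefficient systems need not be isomorphic without such a map. Constructing the comparison — L\"uck and Oliver do it by building, via $\Gamma$-spaces of equivariant vector bundles, an $\Or(G)$-spectrum representing the bundle-theoretic $\mathbb{K}_G^*$ on finite proper complexes and then identifying that spectrum with ${\bf K}^{\topo}$ — is precisely the content of the cited theorem, so your fallback of ``invoking the comparison theorem after checking the axioms'' amounts to citing the result being proved. (A smaller point: the cohomology theory attached to an $\Or(G)$-spectrum requires a mapping-space construction rather than the balanced product used for homology; the paper is equally silent on this, but your sketch should not treat it as automatic. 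Also note the paper's definition of $\mathbb{K}_G^*$ is only given for \emph{finite} proper complexes, so the finiteness restriction you impose is forced by the definitions, not merely a convenience.)
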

\begin{proof}See \cite{LueckOliver01-1}.
\end{proof}

As in the classical case,  there is an Atiyah-Hirzebruch type spectral sequence converging to equivariant K-theory (respectively $K$-homology) whose $E_2$-term is the Bredon cohomology (respectively Bredon homology) with coefficients in the Bredon module of complex representations $\mathcal{R}$ (see \cite{LueckOliver01-1} for details). 

More concretely, given a discrete group $G$ and any finite dimensional proper $G$-complex $X$, the equivariant skeletal filtration of $X$ induces a cohomological spectral sequence with
\begin{eqnarray*}
E^{p,2q}_{2}(X) \; \cong \; \mathcal{H}^{p}_{G}(X;\mathcal{R}) & \Longrightarrow & K^{\ast}_{G}(X),
\end{eqnarray*}
and a homological spectral sequence with
\begin{eqnarray*}
E_{p,2q}^{2}(X) \; \cong \;  \mathcal{H}_{p}^{G}(X;\mathcal{R}) & \Longrightarrow & K_{\ast}^{G}(X).
\end{eqnarray*}

Notice that, if $\dim(X)=2$ (which is the case when $X = \underbar EG$ for a Bianchi group $G$, cf. below), we have that Bredon cohomology groups (respectively homology groups)  are trivial for $p>2$, so both spectral sequences collapse in $E_2$. In fact, both spectral sequences induce  split short exact sequences (natural in $X$)
\begin{equation}\label{eq1}
0 \longrightarrow \mathcal{H}_G^{2}(X;\mathcal{R}) \longrightarrow K_G^{0}(X) \longrightarrow \mathcal{H}_G^{0}(X;\mathcal{R}) \longrightarrow 0.
\end{equation} and
\begin{equation}\label{eq2}
0 \longrightarrow \mathcal{H}^G_{2}(X;\mathcal{R}) \longrightarrow K^G_{0}(X) \longrightarrow \mathcal{H}^G_{0}(X;\mathcal{R}) \longrightarrow 0\end{equation}
and natural isomorphisms in $X$, $K_G^1(X)\cong\mathcal{H}_G^1(X;\mathcal{R})$ and $K^G_1(X)\cong\mathcal{H}^G_1(X;\mathcal{R})$. Both the exact sequences and the isomorphisms above are compatible with the induction structure.

\section{Hecke algebras and Hecke operators}

In the case where $G$ is an arithmetic group, given as a discrete subgroup  $G \subset \mathfrak{G}$ of a Lie group $\mathfrak{G}$, homogeneous $\mathfrak{G}$-spaces lead to  $G$-spaces of arithmetic relevance\footnote{Here the Lie group $\mathfrak{G}$ is the group of real or complex points of an algebraic group defined over  $\mathbb{Q}$.}. 

As many of the arithmetic properties of the group $G$ can be encoded in terms of its Hecke algebra, it will be important for us to study the action of the Hecke algebra on the Bredon cohomology and homology of these $G$-spaces. In the following paragraphs we discuss some generalities on Hecke algebras. 

\subsection{Double cosets and Hecke algebras}

Let $\mathfrak{G}$ be a group. Two subgroups of $\mathfrak{G}$ are said to be \emph{commensurable} if their intersection has finite index in both. Commensurability defines an equivalence relation in the set of subgroups of $\mathfrak{G}$. If $G_{1}$ and $G_{2}$ are subgroups of $\mathfrak{G}$ that are commensurable, we use the notation 
\begin{eqnarray*}
	G_{1} &\sim& G_{2}.
\end{eqnarray*}

Let $G$ be a subgroup of $\mathfrak{G}$. We define the \emph{commensurator of $G$ in $\mathfrak{G}$} as the subgroup
\begin{eqnarray*}
\mathrm{Comm}_{\mathfrak{G}}(G) =  &   \{ \,  g \in \mathfrak{G} \; | \;     G    \sim   g G g^{-1} \, \}.
\end{eqnarray*}
Note that if $G_{1}$ and  $G_{2}$ are commensurable subgroups of $\mathfrak{G}$ then
\begin{eqnarray*}
\mathrm{Comm}_{\mathfrak{G}}(G_1)  &=&  \mathrm{Comm}_{\mathfrak{G}}(G_2) .
\end{eqnarray*}

\begin{example} Let  
	\begin{eqnarray*}
		\mathfrak{G} &=&   \PGL_{2}^{+}(\mathbb{R})
	\end{eqnarray*}
and let $G$ be the modular group  $\PSL_2(\mathbb{Z})$, then the commensurator of $G$ in  $\mathfrak{G}$ is 
	\begin{eqnarray*}
		\mathrm{Comm}_{\mathfrak{G}}(G)  \; = \; \mathrm{Comm}_{\PGL_{2}^{+}(\mathbb{R})}(\PSL_2(\mathbb{Z})) &=&  \PGL_{2}^{+}(\mathbb{Q}).
	\end{eqnarray*}
\end{example}

\begin{example} 
Let 
\begin{eqnarray*}
		\mathfrak{G} &=&  \PGL_{2}(\mathbb{C})
\end{eqnarray*}
and let $\Q(\sqrt{-D}) \subset \mathbb{C}$ be a quadratic imaginary extension of $\mathbb{Q}$. Denote by $\cO_{\Q(\sqrt{-D})}$ the ring of integers of $\Q(\sqrt{-D})$.  If  $G = \Gamma_{D}$ is  the corresponding Bianchi group
\begin{eqnarray*}
	\Gamma_{D} &=& \PSL_2(\cO_{\Q(\sqrt{-D})})
\end{eqnarray*}
then the commensurator of $G$ in  $\mathfrak{G}$ is 
	\begin{eqnarray*}
		\mathrm{Comm}_{\mathfrak{G}}(G)  \; = \; \mathrm{Comm}_{\PGL_{2}(\mathbb{C})}(	\Gamma_{D} )  &=& \PGL_2\left(\Q(\sqrt{-D})\right).
	\end{eqnarray*}
\end{example}

As above let $\mathfrak{G}$ be a group and let $G$ be a subgroup of $\mathfrak{G}$. Given an element $g$ in 	$\mathrm{Comm}_{\mathfrak{G}}(G)$ we consider the double coset in $\mathfrak{G}$ given by
\begin{eqnarray*}
	G g G.
\end{eqnarray*}
The left action of $G$ on $G g G$ has a finite number of orbits. To compute this number. let $ G^{(g)}  =  g^{-1} G g \bigcap G $  and notice that the map
\begin{eqnarray*}
	G & \longrightarrow & G g G \\
	\gamma & \longmapsto &  g \gamma
\end{eqnarray*}
induces a surjection from $G $ to the quotient $G \backslash G g G $ with kernel $G^{(g)} $ so we have 
\begin{eqnarray*}
	G g G & = & \bigsqcup_{i=1}^{d} G \alpha_i, \quad \text{ where } d = [G : G^{(g)} ].
\end{eqnarray*}
We obtain an analogous decomposition by considering the right  action of $G$ on $G g G$.

The above decompositions lead to a natural product of double cosets as follows.  If  $\alpha$ and $\beta$ are elements in $\mathrm{Comm}_{\mathfrak{G}}(G)$ with 
\begin{eqnarray*}
	G \alpha G =  \bigsqcup_{i=1}^{d} G \alpha_i &\text{ and } &  G \beta  G  =  \bigsqcup_{j=1}^{e}  \beta_j G
\end{eqnarray*}
then we define 
\begin{eqnarray*}
	(G \alpha G) \cdot   (G \beta G) & = & \sum c_{\alpha, \beta}^{\delta}   G \delta G
\end{eqnarray*}
where 
\begin{eqnarray*}
	c_{\alpha, \beta}^{\delta}  & = & \text{ number of pairs of indices }  (i, j )    \text{ such that }  G \alpha_i  \beta_j   = G \delta  
\end{eqnarray*}
and the expression on the right is viewed as an element of the free abelian group generated by double cosets of the form $G \delta G$ with  $\delta \in \mathrm{Comm}_{\mathfrak{G}}(G)$.

If $\Delta$  is a subsemigroup of $\mathrm{Comm}_{\mathfrak{G}}(G)$  with 
\begin{eqnarray*}
	G & \subseteq  & \Delta
\end{eqnarray*}
we denote by 
\begin{eqnarray*}
	\cA (G ; \Delta)
\end{eqnarray*}
the free abelian group generated by double cosets of the form 
\begin{eqnarray*}
	G g G 
\end{eqnarray*}
with $g \in \Delta$. 

The above product of double cosets can be extended linearly to a bilinear map of $\mathbb{Z}$-modules
\begin{eqnarray*}
\cA (G ; \Delta) \times \cA (G ; \Delta)& \longrightarrow & \cA (G ; \Delta)
\end{eqnarray*}
which is associative. The group $\cA (G ; \Delta)$ thus becomes a ring which will be called \emph{the Hecke algebra of $G$ with respect to $\Delta$}. For the above results together with generalities on Hecke algebras see chapter 3 of \cite{Shimura71}.  In the case $\Delta = 	\mathrm{Comm}_{\mathfrak{G}}(G)$ we will denote $\cA (G ; \Delta)$ simply by $\cA (G)$.

\subsection{Action of Hecke operators on group cohomology}

Let $\mathfrak{G}$ be a group and let $G$ be a subgroup of $\mathfrak{G}$. As above, let $\Delta$ be a subsemigroup of $\mathrm{Comm}_{\mathfrak{G}}(G)$  with $G \subseteq \Delta$. If $M$ is an abelian group on which $\Delta$ acts by endomorphisms we can consider $M$ as a $G$-module. Elements of the Hecke algebra $\cA (G ; \Delta)$ define endomorphisms of the cohomology groups of $G$ with coefficients in $M$ leading to an action of $\cA (G ; \Delta)$ on $H^{n}(G; M)$. These operators will be called \emph{Hecke operators associated to} $(G; \Delta)$. In order to define the Hecke operator corresponding to a double coset $G g G$ with $g \in \Delta$ notice that if we have a decomposition 
\begin{eqnarray*}
	G g  G & = &  \bigsqcup_{i=1}^{d} G \alpha_i, \qquad  \alpha_i \in \Delta,
\end{eqnarray*}
and $m \in M^{G}$ is an element of $M$ fixed by $G$ then the element of $M$ given by 
\begin{eqnarray*}
	m \, | \,  G g  G & = &  \sum_{i=1}^{d}\,  \alpha_i \, m 
\end{eqnarray*}
is again fixed by $G$ and independent of the representatives $\alpha_i$ so the coset $G g  G$ defines a map
\begin{eqnarray*}
	T_{g} \, : \, M^{G} & \longrightarrow & M^{G}.
\end{eqnarray*}
These maps can be linearly extended to define operators associated to elements of $\cA(G; \Delta)$:
\begin{eqnarray*}
	m \, | \,  \xi & = &  \sum_{k=1}^{r} c_k \, T_{g_k}(m) 
\end{eqnarray*}
for $ \sum_{k=1}^{r} c_k  ( G g_k  G )  \in \cA (G; \Delta)$ 
and $m\in M^{G}$.  These operators define an action of $\cA(G; \Delta)$ on  $M^G$. 

By naturality the action of $\cA (G; \Delta)$ on $M^{G}$ extends to an action on the cohomology groups of $G$ with coefficients in $M$. At the level of group cocycles in the standard complex the action can be described as follows: 
If $g \in \Delta$ 
\begin{eqnarray*}
	G g  G & = &  \bigsqcup_{i=1}^{d} G \alpha_i,
\end{eqnarray*}
and for $\gamma \in  G$ we denote by $\sigma_{g}^{\gamma}$ the unique permutation in $S_{d}$ such that
\begin{eqnarray*}
	G \, \alpha_i  \gamma & = & G \,  \alpha_{\sigma^{g}_{\gamma}(i)} .
\end{eqnarray*}
In this way we obtain maps 
\begin{eqnarray*}
	\rho^{g}_{j} \, : \,  G  & \longrightarrow  & G   \qquad \text{ for } j = 1, \dots,d 
\end{eqnarray*}
where for $\gamma \in G$ the element $\rho^{g}_{j}  (\gamma) \in G$ is determined by 
\begin{eqnarray*}
	\alpha_j  \gamma  & =  & \rho^{g}_{j}  (\gamma) \alpha_{\sigma^{g}_{\gamma}(j)}. 
\end{eqnarray*}
Now, given a group $r$-cocycle  
\begin{eqnarray*}
	\phi \, : \, G \times \dots \times G &\longrightarrow & M
\end{eqnarray*}
we can take  
\begin{eqnarray*}
	(\, T_g \, \phi  \,) \,  (\gamma_0, \gamma_1, \dots,  \gamma_r)  & = &  \sum_{j=1}^{d} \alpha_{j}^{-1} \phi  ( \rho^{g}_{j}(\gamma_0), \rho^{g}_{j}( \gamma_1), \dots,  \rho^{g}_{j}( \gamma_r))  
\end{eqnarray*}
which is again a cocycle, so $T_g$ defines a morphism 
\begin{eqnarray*}
	T_g   \, : \,  H^{r}(G; M) & \longrightarrow & H^{r}(G; M) 
\end{eqnarray*}
which can be extended to an action of $\cA (G; \Delta)$ on $ H^{r}(G; M) $ for $r \geq 0$, which for $r = 0$, where we have $H^{0}(G; M) = M^{G} $, coincides with the action defined above.

Further information on this action together with its functorial properties and its relation to the classical theory of Hecke operators can be found in \cite{KugaEtAl81}.

\subsection{Hecke correspondences}
\label{HeckeCor}

In order to extend the above definition of Hecke operators to the Bredon cohomology groups of an arithmetic discrete group $G$ it will be important to understand the natural action of elements of $\cA (G; \Delta)$ on $G$-spaces and their quotients. The natural framework for this comes from considering elements of the Hecke ring as correspondences defined in terms of quotients corresponding to double cosets. 

As in the previous sections let  $G$ be a subgroup of a group $\mathfrak{G}$. Suppose now that the group  $\mathfrak{G}$ acts on a topological space $S$ and consider the action of the subgroup $G$ on  $S$. We will be interested in the case where $\mathfrak{G}$ is a Lie group and $S$ is a homogeneous $\mathfrak{G}$-space, also we assume in the following that the action of the discrete group $G$ on $S$ satisfies sufficient conditions for the quotient $ S / G $ to be well behaved.  

Given an element $g \in \mathrm{Comm}_{\mathfrak{G}}(G)$ consider the groups
\begin{eqnarray*}
	K \; = \;  g^{-1} G  g  \, \cap \,   G  &\text{ and  }& {}_{g}K  \; = \;   G   \, \cap \,g G  g^{-1} .
\end{eqnarray*}
Observe that we have group morphisms 
\begin{eqnarray*}
	\begin{tikzcd}
	K   \arrow[r] \arrow[d, hook]  &  {}_{g}K  \arrow[d, hook] \\
	G  &   G
	\end{tikzcd}
\end{eqnarray*}
where the horizontal arrow is a group  isomorphism and the vertical ones are inclusions with finite index.

These morphisms induce maps between the corresponding  quotients of $S$,
\begin{eqnarray*}
	\begin{tikzcd}
	S /  K   \arrow[r] \arrow[d, two heads]{a}  & S  / {}_{g}K    \arrow[d, two heads] \\
	S /  G  &   S  /  G
	\end{tikzcd}
\end{eqnarray*}
where the horizontal arrow is a homeomorphism and the vertical ones are finite index covers.  This diagram determines a correspondence 
\begin{eqnarray*}
	\mathcal{C}_{GgG} & \subset &     S  /  G \;   \times   \;  S  /  G 
\end{eqnarray*}
homeomorphic to $\mathcal{S} /  K $. We call this correspondence the \emph{Hecke correspondence from $S /  G$  to $S /  G$ associated to $GgG$}. We extend this definition using linearity in order to associate correspondences to elements of $\cA (G; \Delta)$. These can in turn be used to define a Hecke action at the level of sheaves on these spaces and their cohomology by defining operators $T_g$ via successive pullbacks and pushforwards along the horizontal maps above.  

\begin{example} The classical example leading to Hecke operators acting on modular forms corresponds to $\mathfrak{G} =   \PGL_{2}^{+}(\mathbb{R})$
viewed as a group of transformations of the hyperbolic plane $S =  \mathbb{H}_{2}$. In this case if $G$ is a subgroup of $\mathfrak{G}$ commensurable with   $\PSL_2(\mathbb{Z})$ the above quotients are modular curves and the Hecke correspondences coming from them define Hecke operators between spaces of modular forms. 
\end{example}

\begin{example} 
As mentioned at the beginning of this section the main example for us arises from 
\begin{eqnarray*}
	\mathfrak{G} &=&  \PGL_{2}(\mathbb{C})
\end{eqnarray*}
which we will view in what follows as a group of transformations of the hyperbolic $3$-space $ S =  \mathbb{H}_{3}$. Hecke correspondences for quotients of $\mathbb{H}_{3}$ by a Bianchi group
	\begin{eqnarray*}
		\Gamma_{D} &=& \PSL_2(\cO_{\Q(\sqrt{-D})})
	\end{eqnarray*}
and its subgroups will be used in the following sections in order to define Hecke operators on the Bredon cohomology of Bianchi groups. 
\end{example}

For more on this point of view together with its relation to the theory of automorphic forms see \cite{Harder87} and \cite{Harder91}.

\section{Hecke operators on Bredon (co)homology and $K$-theory}

\subsection{Restriction and corestriction}

Let us start defining the restriction and corestriction operators in equivariant K-(co)homology and Bredon (co)homology.

Let $X$ be a $G$-CW-complex and let $H\subseteq G$ be a subgroup of finite index, first note that we have natural isomorphisms given by the induction structure
\begin{eqnarray*}
K_*^H(X) & \xrightarrow{\ind_H^G} & K_*^G(G/H\times X), \\
K^*_H(X)& \xrightarrow{\ind_H^G} & K^*_G(G/H\times X), \\
\cH_*^H(X;\mathcal{R}) & \xrightarrow{\ind_H^G} & \cH_*^G(G/H\times X;\mathcal{R}), \\
\cH^*_H(X;\mathcal{R}) & \xrightarrow{\ind_H^G} & \cH^*_G(G/H\times X;\mathcal{R}).
\end{eqnarray*}

Define the \emph{corestriction} morphisms as the compositions
$$K_*^H(X)\xrightarrow{\ind_H^G}K_*^G(G/H\times X)\xrightarrow{\pi_{2*}}K_*^G(X),$$
$$\cH_*^H(X;\mathcal{R})\xrightarrow{\ind_H^G} \cH_*^G(G/H\times X;\mathcal{R})\xrightarrow{\pi_{2*}} \cH_*^G(X;\mathcal{R}),$$
$$K^*_H(X)\xrightarrow{\ind_H^G}K^*_G(G/H\times X)\xrightarrow{\pi_{2!}}K^*_G(X),$$
$$\cH^*_H(X;\mathcal{R})\xrightarrow{\ind_H^G} \cH^*_G(G/H\times X;\mathcal{R})\xrightarrow{\pi_{2!}} \cH^*_G(X;\mathcal{R}),$$
where $\pi_2:G/H\times X\to X$ is the second projection, and $\pi_{2!}$ denotes the shriek map that in this case can be defined as the composition
$$K^*_G(G/H\times X)\xrightarrow{p^*}K_G^*(\mathbb{C}[G/H]\times X)\xrightarrow{Th}K_G^*(X), $$where $\mathbb{C}[G/H]$ denotes the complex vector space generated by $G/H$ with the canonical $G$-action, $p:\mathbb{C}[G/H]\times X\to G/H\times X$ is the natural projection and $Th$ is the Thom isomorphism as in Thm. 3.14 in \cite{LueckOliver01-1}. We denote the above compositions by $\cores_H^G$ (or simply $\cores$ if $H$ and $G$ are clear from the context). This construction can be performed also in the case of Bredon cohomology.
\begin{remark}
	If $G$ is finite and $X=\pt$, corestriction corresponds to the usual induction morphism on the representation ring.
\end{remark}

We have also \emph{restriction} morphisms denoted by $\res_H^G$ (or simply by res if $H$ and $G$ are clear from the context)
\begin{eqnarray*}
K_*^G(X)& \xrightarrow{\res_H^G} & K_*^H(X), \\
\cH_*^G(X;\mathcal{R})& \xrightarrow{\res_H^G} & \cH_*^H(X;\mathcal{R}), \\
K^*_G(X) & \xrightarrow{\res_H^G} &K^*_H(X), \\
\cH^*_G(X;\mathcal{R})& \xrightarrow{\res_H^G} & \cH^*_H(X;\mathcal{R})
\end{eqnarray*}
defined in a similar way as corestriction.

\subsection{Conjugation and Hecke operators}

Now suppose that the discrete group $G$ is given as a subgroup of a Lie group $\mathfrak{G}$. Given an element $g \in \mathrm{Comm}_{\mathfrak{G}}(G)$ there are \emph{conjugation} morphisms
\begin{eqnarray*}
K_*^H(X) & \xrightarrow{Ad_g} & K_*^{gHg^{-1}}(X), \\
\cH_*^H(X;\mathcal{R})& \xrightarrow{Ad_g} & \cH_*^{gHg^{-1}}(X;\mathcal{R}), \\
K^*_H(X)& \xrightarrow{Ad_g} & K^*_{gHg^{-1}}(X), \\
\cH^*_H(X;\mathcal{R}) & \xrightarrow{Ad_g} & \cH^*_{gHg^{-1}}(X;\mathcal{R}),
\end{eqnarray*}
induced from the conjugation morphism from $\Or(H)$ to $\Or(gHg^{-1})$.

\begin{definition}
Let $G$ be a discrete subgroup of a Lie group $\mathfrak{G}$ and let $X$ be a $G$-CW-complex. Given an element $g \in \mathrm{Comm}_{\mathfrak{G}}(G)$ and a finite index subgroup $H\subseteq G$ we define the Hecke operator associated to $(G, H, g, X)$ as the composition 
 $$ \xymatrix{
	K^G_n(X) \ar[r]^{\res} &K_n^H(X)\ar[r]^{Ad_g\quad}&K^{gHg^{-1}}_n(X)\ar[r]^{\;\;\cores} &K_n^G(X).}$$
We will denote this operator by $T_{g,X}$ when $G$ and $H$ are clear from the context.

Similarly, for Bredon homology, let $X$ be a proper $G$-CW-complex, we denote by $\mathcal{T}_{g,X}$ to the Hecke operator associated to $(G,H,g,X)$ defined as the composition
	$$\xymatrix{
		\cH^G_n(X;\mathcal{R}) \ar[r]^{\res} & \cH_n^H(X;\mathcal{R})\ar[r]^{Ad_g\quad}& \cH^{gHg^{-1}}_n(X;\mathcal{R})\ar[r]^{\;\;\cores} & \cH_n^G(X;\mathcal{R}).}$$
\end{definition}

Because all of these morphisms are defined by maps of spectra, they are natural on $X$, then we have the following commutative diagram.
$$\xymatrix{
	K^G_n(\underbar{E}G) \ar[d]^{\mu_{G,n}}\ar[r]^{\res} & K_n^H(\underbar E G)\ar[d]^{\mu_{H,n}}\ar[r]^{Ad_g\;\;} & K^{gHg^{-1}}_n(\underbar EG)\ar[d]^{\mu_{gHg^{-1},n}}\ar[r]^\cores & K_n^G(\underbar EG)\ar[d]^{\mu_{G,n}}\\
	K_n(C_r^*(G))\ar[r]^{\res} & K_n(C^*_r(H))\ar[r]^{Ad_g\quad} &  K_n(C^*_r(gHg^{-1}))\ar[r]^{\quad\cores} & K_n(C_r^*(G))}$$

Thus, in the case that the group $G$ satisfies the Baum-Connes conjecture, we can compute the Hecke operators defined on the $K$-theory of the reduced $C^*$-algebra of $G$ using  the Hecke operators defined on the left hand side of the Baum-Connes conjecture, the main point being that these are more suitable for computations. Summarizing we have the following.

\begin{theorem}\label{comm}
	There is a commutative diagram
	$$\xymatrix{
		K_n^G(\underbar{E}G)\ar[d]^{\mu_{G,n}}\ar[rr]^{T_{g,\uE G}} && K_n^G(\underbar{E}G)\ar[d]^{\mu_{G,n}}\\
		K_n(C_r^*(G))\ar[rr]^{T_{g,\pt}} && K_n(C_r^*(G)).}$$
\end{theorem}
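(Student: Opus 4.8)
The theorem asserts compatibility of the Hecke operator $T_{g,\uE G}$ on the source of the Baum--Connes assembly map with the operator $T_{g,\pt}$ on $K_n(C_r^*(G))$. Since $T_{g,X}$ was \emph{defined} as the composite $\cores \circ Ad_g \circ \res$ for every $G$-CW-complex $X$, the strategy is to assemble the large commuting diagram already displayed in the excerpt, applied to the two cases $X = \uE G$ and $X = G/G = \pt$, and to invoke naturality of all three constituent morphisms in $X$. Concretely: first I would apply the map of $G$-CW-complexes $pr\colon \uE G \to G/G$ and use that $\res$, $Ad_g$, and $\cores$ are each induced by maps of ($\Or$-)spectra, hence are natural transformations of equivariant (co)homology theories; this gives, for each of the three arrows, a commuting square relating its value on $\uE G$ to its value on $\pt$. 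Stacking these three squares side by side yields the commuting rectangle
$$\xymatrix{
K_n^G(\uE G)\ar[d]_{H_n^G(pr)}\ar[rr]^{T_{g,\uE G}} && K_n^G(\uE G)\ar[d]^{H_n^G(pr)}\\
K_n^G(\pt)\ar[rr]^{T_{g,\pt}} && K_n^G(\pt).}$$
Second, I would identify the vertical maps: by the Baum--Connes conjecture for $G$, the left map $H_n^G(pr;\mathbf{K}^{\topo}) = \mu_{G,n}$ is the assembly map, and similarly on the right; and $H_n^G(G/G;\mathbf{K}^{\topo}) = \pi_n(\mathbf{K}^{\topo}(G)) \cong K_n(C_r^*(G))$ by the defining property of $\mathbf{K}^{\topo}$. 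This is exactly the claimed square.

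The one point that needs genuine care is that $T_{g,\pt}$, as it appears in the big diagram of the excerpt, is the composite
$$K_n(C_r^*(G)) \xrightarrow{\res} K_n(C_r^*(H)) \xrightarrow{Ad_g} K_n(C_r^*(gHg^{-1})) \xrightarrow{\cores} K_n(C_r^*(G)),$$
and one must check that under the identification $K_n(C_r^*(G)) \cong H_n^G(G/G;\mathbf{K}^{\topo})$ this agrees with the restriction--conjugation--corestriction composite on the equivariant $K$-homology of the point. The restriction and corestriction maps on $K_*^G(-)$ were built from the induction structure together with the projection $\pi_{2}\colon G/H\times X \to X$; evaluating at $X = \pt$ gives $G/H$ itself, so $\res_H^G\colon K_n^G(\pt) = K_n^G(G/G) \to K_n^H(\pt) = K_n^G(G/H)$ is induced by $G/H \to G/G$, and $\cores$ by the wrong-way (transfer) map along the same covering; one then needs that under $\mathbf{K}^{\topo}$ these correspond to the functorially induced maps $C_r^*(H) \hookrightarrow C_r^*(G)$ and its transfer, which is precisely the content that $\mathbf{K}^{\topo}$ is a functor on groupoids respecting equivalences (so that the $H$-orbit $G/H$ with its $G$-action is sent to $C_r^*(H)$ up to the canonical identification). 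This is the main obstacle: reconciling the two descriptions of $\res$ and $\cores$ — the spectrum-level one used to get naturality in $X$, and the $C^*$-algebraic one used to define $T_{g,\pt}$ — and likewise for $Ad_g$, which comes from the conjugation functor $\Or(H) \to \Or(gHg^{-1})$ on one side and from an isomorphism $C_r^*(H) \cong C_r^*(gHg^{-1})$ on the other.

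Assuming the compatibility of the two descriptions (which is the reason the big rectangular diagram in the excerpt commutes, and which I would cite to \cite{Lueck19} together with the functoriality of $\mathbf{K}^{\topo}$ on $\GROUPOIDS$), the proof reduces to the formal pasting argument above. I would write it as: the outer rectangle of the displayed nine-square diagram in the excerpt commutes because each of its three squares commutes by naturality of $\res$, $Ad_g$, $\cores$ in $X$ (all being induced by maps of spectra, by the Lemma of the excerpt); the top row composes to $T_{g,\uE G}$ and the bottom row to $T_{g,\pt}$ by definition; and under the Baum--Connes hypothesis the composite left and right vertical maps are $\mu_{G,n}$. Hence the desired square commutes. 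Note the proof does not actually use that the assembly maps are \emph{isomorphisms} — only that they are the maps $H_n^G(pr;\mathbf{K}^{\topo})$ and that the bottom groups are $K_n(C_r^*(G))$ — so the statement holds for all $G$, though it is only \emph{useful} for computations when $G$ satisfies Baum--Connes, as remarked in the text preceding the theorem.
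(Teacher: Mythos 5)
Your proposal is correct and follows essentially the same route as the paper: the paper's (implicit) proof is exactly the four-column diagram preceding the theorem, whose squares commute because $\res$, $Ad_g$ and $\cores$ are induced by maps of spectra and hence are natural, so the outer rectangle with vertical assembly maps gives the statement. In fact you are more explicit than the paper on the one delicate point (identifying the spectrum-level $\res$, $Ad_g$, $\cores$ at $X=\pt$ with the $C^{*}$-algebraic maps on $K_n(C_r^*(-))$ via the functor $\mathbf{K}^{\topo}$ on $\GROUPOIDS$), which the paper simply absorbs into the assertion that the diagram commutes.
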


We also have a relation between $T_{g,X}$ and $\mathcal{T}_{g,X}$.

\begin{theorem}\label{bredon}
Let $G$ be a discrete group such that $\underbar{E}G$ is a $G$-CW-complex of dimension at most 2. We have commutative diagrams
	$$\xymatrix{0 \ar[r] &\mathcal{H}^G_{2}(\underbar{E}G;\mathcal{R}) \ar[r]\ar[d]^{\mathcal{T}_{g,\uE G}}& K^G_{0}(\underbar{E}G) \ar[r]\ar[d]^{{T}_{g,\uE G}}& \mathcal{H}^G_{0}(\underbar{E}G;\mathcal{R})\ar[r]\ar[d]^{\mathcal{T}_{g,\uE G}} & 0\\ 0 \ar[r] &\mathcal{H}^G_{2}(\underbar{E}G;\mathcal{R}) \ar[r]& K^G_{0}(\underbar{E}G) \ar[r]& \mathcal{H}^G_{0}(\underbar{E}G;\mathcal{R})\ar[r]&0  }$$
	
	and
	$$\xymatrix{
		K_1^G(\underbar{E}G)\ar[d]^{\cong}\ar[rr]^{T_{g,\uE G}} & & K_1^G(\underbar{E}G)\ar[d]^{\cong}\\
		\cH_1^G(\underbar{E}G;\mathcal{R})\ar[rr]^{\mathcal{T}_{g,\uE G}} & & \cH_1^G(\underbar{E}G;\mathcal{R}).}$$
\end{theorem}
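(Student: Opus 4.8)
The strategy is to unwind both $T_{g,\uE G}$ and $\mathcal{T}_{g,\uE G}$ into the three elementary maps out of which they are built --- restriction, conjugation $Ad_g$, and corestriction --- and to show that each of these three is compatible with the Atiyah--Hirzebruch type spectral sequence, hence with the edge morphisms that assemble the split short exact sequence \eqref{eq2} and the isomorphism $K^{G}_{1}(\uE G)\cong\cH^{G}_{1}(\uE G;\mathcal{R})$. Once this is known for the three building blocks it is automatic for their composite, and stacking the three resulting commuting squares vertically produces the two diagrams in the statement.

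The point to keep in mind throughout is that the sequences \eqref{eq1} and \eqref{eq2} and the degree one isomorphism arise from the skeletal filtration of $\uE G$: the filtration quotients of $K^{G}_{n}(\uE G)$ are the columns $E^{\infty}_{p,\ast}$ of the homological spectral sequence, which for $\dim\uE G\le 2$ collapse to $E^{2}_{0,\ast}=\cH^{G}_{0}(\uE G;\mathcal{R})$, $E^{2}_{1,\ast}=\cH^{G}_{1}(\uE G;\mathcal{R})$ and $E^{2}_{2,\ast}=\cH^{G}_{2}(\uE G;\mathcal{R})$. So the content of the theorem is exactly that $T_{g,\uE G}$ preserves this filtration and that the map it induces on the associated graded is $\mathcal{T}_{g,\uE G}$. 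It is therefore enough to check that $\res$, $Ad_g$ and $\cores$ each preserve the skeletal filtration and induce on the $E^{2}$-page the corresponding Bredon restriction, conjugation and corestriction. For this one should first record that, for any subgroup $H\subseteq G$, restriction of the $G$-action makes $\uE G$ into a model for $\uE H$ --- it is a proper $H$-CW-complex of dimension at most $2$ with contractible fixed point sets for finite subgroups --- and the same holds after conjugating by $g$; hence the intermediate homology theories $K^{H}_{\ast}(\uE G)$ and $K^{gHg^{-1}}_{\ast}(\uE G)$ also sit in sequences of the form \eqref{eq2}, and the three squares make sense.

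Now the three constituents. The maps $\res$ and $Ad_g$ are induced by maps of $\Or(-)$-spectra (change of groups, respectively the conjugation functor $\Or(H)\to\Or(gHg^{-1})$); the Atiyah--Hirzebruch spectral sequence is functorial for such morphisms and realizes on the $E^{2}$-page precisely the restriction and conjugation maps of Bredon homology, which settles these two cases. For corestriction, recall from Section~\ref{K-homology} that $\cores_{H}^{G}$ is the composite of the induction isomorphism $\ind_{H}^{G}$ with the pushforward $\pi_{2\ast}$ along the projection $\pi_{2}\colon G/H\times\uE G\to\uE G$ (in cohomology, with the shriek map built from the Thom isomorphism). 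The induction structure is compatible with the spectral sequence and with \eqref{eq1}, \eqref{eq2}, as recorded in Section~\ref{K-homology}, while $\pi_{2}$ is a cellular $G$-map, so $\pi_{2\ast}$ preserves the skeletal filtration and induces on $E^{2}$ the corresponding pushforward in Bredon homology. Hence $\cores$ is filtration preserving with the expected effect on $E^{2}$; composing the three maps gives that $T_{g,\uE G}$ preserves the filtration of $K^{G}_{0}(\uE G)$ and induces $\cores\circ Ad_g\circ\res=\mathcal{T}_{g,\uE G}$ on the associated graded, which is the first diagram, and the same reasoning in filtration degree one, where there is a single nonzero filtration step, gives the second.

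I expect the main obstacle to be the corestriction step, because it is not a plain map of spectra over a fixed group but is assembled from the induction structure and from the shriek/Thom map, so one must check with some care that these are filtration preserving and induce the expected induction and projection maps on the $E^{2}$-page. In the homological setting this is easier than it looks, since $\pi_{2\ast}$ is just the transfer along the finite-sheeted cover $G/H\times\uE G\to\uE G$, and the compatibility of $\ind_{H}^{G}$ with the spectral sequence and with \eqref{eq1}, \eqref{eq2} is precisely the compatibility with the induction structure already established. A minor secondary point is the bookkeeping verification, made above, that $\uE G$ remains a two-dimensional model of the classifying space for proper actions after restriction to $H$ and conjugation to $gHg^{-1}$, so that the intermediate sequences collapse as well.
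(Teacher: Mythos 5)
Your argument is correct and is essentially the paper's own (one-line) proof, made explicit: the paper simply observes that $\underbar{E}G$ and $G/H\times\underbar{E}G$ are proper $G$-CW-complexes of dimension at most $2$, so the sequences \eqref{eq2} and the degree-one isomorphism apply naturally, and then invokes the already-stated naturality in $X$ and compatibility with the induction structure of the three constituent maps $\res$, $Ad_g$, $\cores$. Your spelling-out of the skeletal-filtration/spectral-sequence compatibility of each constituent is exactly the content behind that one line (only the passing description of $\pi_{2\ast}$ as a ``transfer'' is slightly off --- it is just the map induced by the cellular $G$-map $\pi_2$, which is all you need and also say).
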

\begin{proof}
	 Since $\underbar{E}G$ and $G/H\times \underbar{E}G$ are proper $G$-CW-complexes of dimension at most 2, we have natural short exact sequences as in \ref{eq2} and the natural identification $K^G_1(\uE G)\cong \cH_1^G(\uE G;\mathcal{R})$.\end{proof}

We conclude this section explaining how to compute $\mathcal{T}_{g,X}$ directly from the Bredon chain complex.

First note that if
\begin{eqnarray*}
\underline{C_n(X)}\otimes_{\mathfrak{F}}\mathcal{R} & \cong & \bigoplus_\alpha R(S_\alpha),
\end{eqnarray*}
then 
\begin{eqnarray*}
\underline{C_n(G/H\times X)}\otimes_{\mathfrak{F}}\mathcal{R} & \cong & \bigoplus_\alpha \bigoplus_{\chi\in G/H}R(\chi H\chi^{-1}\cap S_\alpha).
\end{eqnarray*}

The morphism 
\begin{eqnarray*}
\res \, : \, \cH_n^G(X;\mathcal{R}) & \longrightarrow  & \cH_n^G(G/H\times X;\mathcal{R})\;\cong\;\cH_n^H(X;\mathcal{R})
\end{eqnarray*}
is induced by the restriction morphism on the representation ring 
\begin{eqnarray*}
\bigoplus_\alpha R(S_\alpha)  & \longrightarrow  & \bigoplus_\alpha \bigoplus_{\chi\in G/H}R(\chi H\chi^{-1}\cap S_\alpha).
\end{eqnarray*}

The morphism 
\begin{eqnarray*}
Ad_g:\cH_n^H(X;\mathcal{R})& \longrightarrow  & \cH_n^{gHg^{-1}}(X;\mathcal{R})
\end{eqnarray*}
is induced by the isomorphism
\begin{eqnarray*}
\bigoplus_\alpha \bigoplus_{\chi\in G/H}R(\chi H\chi^{-1}\cap S_\alpha) & \longrightarrow  &  \bigoplus_\alpha \bigoplus_{\chi\in G/gHg^{-1}}R(\chi g Hg^{-1}\chi^{-1}\cap S_\alpha).
\end{eqnarray*}

Finally, the morphism 
\begin{eqnarray*}
\cores \, : \,  \cH_n^{gHg^{-1}}(X;\mathcal{R}) \;\cong\; \cH_n^G(G/gHg^{-1}\times X;\mathcal{R}) & \longrightarrow  &  \cH_n^G(X;\mathcal{R})
\end{eqnarray*}
 is induced by the induction morphism on the representation ring
\begin{eqnarray*}
\bigoplus_\alpha \bigoplus_{\chi\in G/gHg^{-1}}R(\chi g Hg^{-1}\chi^{-1}\cap S_\alpha) & \longrightarrow  & \bigoplus_\alpha R(S_\alpha).
\end{eqnarray*}
For Bredon cohomology the Hecke operator can be described in a similar way.

As we will see in the next section, these three morphisms can also be computed directly in the Bredon homology of $X$ as an $H$-space.

\section{Computations for Bianchi groups}\label{comp}

In this section we specialize to the case where $G$ is a Bianchi group. We use Thm. \ref{comm} and Thm. \ref{bredon} in order to explicitly compute the action of the operators $T_{g,\pt}$ for the group $\Gamma_1 = \PSL_{2}(\Z[i])$ for elements $g \in \PGL_2(\cO_{\Q(i)})$ associated to primes in $\Z[i]$. As will be seen below, these techniques apply for other Bianchi groups as well.

\begin{definition}
Let $D$ be a positive square-free integer, and let $\cO_{D } = \cO_{\Q(\sqrt{-D})}$ be the ring of integers of the imaginary quadratic extension $\Q(\sqrt{-D})$. The Bianchi group associated to $D$ is defined as
\begin{eqnarray*}
\Gamma_{D} &= & \PSL_{2}(\cO_{D }) \, = \, \SL_{2}(\cO_{\Q(\sqrt{-D})})/\{\pm I\}.
\end{eqnarray*}

\end{definition}


We can describe the rings $\cO_D$ explicitly in terms of the discriminant of the quadratic field $\Q(\sqrt{-D})$. Let $\delta=\sqrt{-D}$ and $\eta=\frac{1}{2}(1+\delta)$, we have $$\cO_D = \Z[\delta]\quad\mbox{for}\quad  D \equiv1,2\!\!\mod 4,\quad\mbox{and}$$ $$\cO_D =\Z[\eta]\quad\mbox{for}\quad D \equiv3\!\!\mod 4.$$ For a proof see \cite[Chapter 13]{Artin11}.

For $D = 1, \, 2,\,  3,\, 7, \, 11$ the ring $\cO_{D }$ is an Euclidean domain. For these values of $D$ we will refer to  the corresponding Bianchi groups $\Gamma_{D}$ as {\emph{Euclidean Bianchi groups}}. 

In general, except from $D=3$, Bianchi groups can be described as amalgamated products. The amalgam decompositions for the Euclidean Bianchi groups are described below.

\begin{proposition}
We have 
\begin{eqnarray*}
\Gamma_{1} & \cong & \big(A_{4}\ast_{C_{3}}S_{3}\big)\ast_{\PSL_{2}(\Z)}\big(S_{3}\ast_{C_{2}}D_{2}\big);
\end{eqnarray*}
\begin{eqnarray*}
\Gamma_{2} & \cong & G_{1}\ast_{(\Z*\,C_{2})}G_{2},
\end{eqnarray*}
where $G_{1}$ is the HNN extension of $C_{2}\times C_{2}$ associating two generators and $G_{2}$ is the HNN extension of $A_{4}$ associating two $3$-cycles;
\begin{eqnarray*}
\Gamma_{7} & \cong & \big(\Z\ast\,C_{2}\big)\ast_{(\Z\ast C_{2}\ast C_{2})}G_,
\end{eqnarray*}
where $G$ is the HNN extension of $S_{3}*_{C_{2}}S_{3}$ associating a $3$-cycle with itself; and
\begin{eqnarray*}
\Gamma_{11} & \cong & \big(\Z\ast\,C_{3}\big)\ast_{(\Z\ast C_{3}\ast C_{3})}G,
\end{eqnarray*}
where $G$ is the HNN extension of $A_{4}*_{C_{3}}A_{4}$ associating a $3$-cycle with itself. 
\end{proposition}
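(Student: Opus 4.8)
The plan is to obtain each decomposition from the action of $\Gamma_D$ on hyperbolic $3$-space $\mathbb{H}_3$ by way of Bass--Serre theory. Recall that $\Gamma_D$ acts properly discontinuously on $\mathbb{H}_3$, and that the reduction theory of binary Hermitian forms --- Bianchi's analysis, made precise by Swan (cf.\ \cite{Bianchi1892}, \cite{Swan}) --- produces an explicit fundamental polyhedron $P_D \subset \mathbb{H}_3$, which for the four non--exceptional Euclidean values $D = 1, 2, 7, 11$ is bounded by a small, completely determined family of unit hemispheres and vertical half--planes. From $P_D$ one extracts a $\Gamma_D$-invariant simplicial tree $T_D$: a suitable $\Gamma_D$-invariant family of totally geodesic planes --- the ``walls'' that appear when $\mathbb{H}_3$ is equivariantly collapsed onto its $1$-dimensional spine --- cuts $\mathbb{H}_3$ into chambers whose incidence graph is a tree, and $\Gamma_D$ acts on $T_D$ simplicially and without inversions. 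Equivalently, one may first read off a presentation of $\Gamma_D$ from the side pairings and edge cycles of $P_D$ via the Poincar\'e polyhedron theorem and then recognise it as the presentation of a graph of groups.

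The core of the argument is the computation of the quotient $T_D / \Gamma_D$ together with the vertex and edge stabilizers. For $D = 1$ the quotient graph is a tree --- in fact a path --- and its stabilizers are read off directly from $P_D$: the finite ones are the maximal finite subgroups of $\Gamma_1$, whose conjugacy classes one classifies from $\PSL_2(\mathcal{O}_1)$ and which turn out to be copies of $A_4$, $S_3$, $D_2 = C_2 \times C_2$ and their common subgroups $C_3$, $C_2$; the single infinite stabilizer is the copy of $\PSL_2(\Z)$ of integral matrices, which stabilizes the geodesic plane lying over $\R \subset \C$. The fundamental theorem of Bass--Serre theory then assembles $\Gamma_1$ from this graph of groups as the iterated amalgam $(A_4 \ast_{C_3} S_3) \ast_{\PSL_2(\Z)} (S_3 \ast_{C_2} D_2)$. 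For $D = 2, 7, 11$ the procedure is identical, except that $T_D / \Gamma_D$ now contains a loop, so the corresponding graph of groups has an HNN factor; tracking the stabilizers around the loop produces the HNN extensions and amalgamating subgroups recorded in the statement. One also checks why $D = 3$ is excluded: the image of the unit group $\mathcal{O}_3^{\times}$ in $\PSL_2$ is $C_3$ rather than $C_2$, and the extra symmetry this gives $P_3$ collapses the candidate quotient graph, so no splitting of this shape exists.

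The main obstacle is precisely this case--by--case geometry, which depends delicately on $D$: one must (i) pin down $P_D$ and its face lattice; (ii) choose the separating family of walls and verify that its incidence graph is a tree with the stated quotient; and (iii) identify, up to conjugacy in $\Gamma_D$, the vertex and edge stabilizers --- which for $D = 2, 7, 11$ requires recognising some of them as themselves infinite amalgams or HNN extensions, typically as stabilizers of totally geodesic planes or of parabolic fixed points. None of these steps is conceptually hard, but each is intricate, and all of it is classical, resting on the structure of the Bianchi polyhedron in \cite{Swan} and on subsequent work on the Euclidean Bianchi groups. Accordingly, in the write--up I would carry out the Bass--Serre argument in full detail only for $D = 1$, the case needed in Section~\ref{comp}, and refer to the literature for $D = 2, 7, 11$.
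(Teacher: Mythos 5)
The paper itself does not prove this proposition: its entire ``proof'' is the citation to Fine's book \cite{Fine89}, so your decision to defer the cases $D=2,7,11$ (and the heavy geometric input of \cite{Bianchi1892}, \cite{Swan}) to the literature is not, by itself, a departure from the paper's level of rigor. The difficulty is with the one case you claim to carry out, $D=1$, where the Bass--Serre set-up as you describe it cannot work. You posit a single $\Gamma_1$-tree whose chamber stabilizers are the maximal finite subgroups $A_4$, $S_3$, $C_2\times C_2$, with $\PSL_2(\Z)$ as ``the single infinite stabilizer,'' and then invoke the structure theorem to assemble the iterated amalgam. But $\PSL_2(\Z)$ cannot be an edge group of such a graph of groups (an edge group must embed in the adjacent vertex groups, which you take to be finite), and if it is a vertex group then your graph of groups has finite or virtually free vertex groups and finite edge groups, so its fundamental group is virtually free --- whereas $\Gamma_1$ contains $\Z[i]\cong\Z^2$ in its cusp stabilizers and hence is not virtually free. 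More generally, the $1$-dimensional object with finite stabilizers simply does not exist here: the natural cocompact model is the $2$-dimensional square complex (the one used later in the paper, with vertex stabilizers $A_4,S_3,D_2,S_3'$ and edge stabilizers $C_3,C_2',C_2,C_3'$), and collapsing it to a tree is exactly what the $\Z^2$ cusps obstruct.

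What does go through is a two-stage argument, and your sketch should be reorganized accordingly: first split $\Gamma_1$ along (the orbit of) a totally geodesic plane, obtaining a segment of groups whose \emph{infinite} vertex groups are the stabilizers of the two complementary pieces and whose edge group is $\PSL_2(\Z)$ --- and here one must verify that the edge group is really $\PSL_2(\Z)$ and not the strictly larger full plane stabilizer, which contains elements such as the class of $\mathrm{diag}(i,-i)$ acting on the plane by a reflection; then split each of the two vertex groups as the finite amalgams $A_4\ast_{C_3}S_3$ and $S_3'\ast_{C_2}D_2$ over $C_3$, resp.\ $C_2$. Alternatively, and this is essentially Fine's route in \cite{Fine89}, one derives the Swan/Poincar\'e presentation of $\Gamma_1$ and recognizes the iterated amalgam structure algebraically; you mention this as an ``equivalent'' option, and it is the version of your plan that is actually correct as stated. (Your closing remark about $D=3$ is not needed for the proposition and, as given, is not an argument: the non-splitting of $\Gamma_3$ is a theorem in its own right, not a consequence of the unit group being larger.)
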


For a proof of the above facts the reader may consult \cite{Fine89}.

We will now focus on $\Gamma_1$ and the explicit computation of Hecke operators associated to it.

\subsection{The group $\Gamma_1 =  \PSL_{2}(\Z[i])$ }

From the above proposition we have an isomorphism
$$\Gamma_{1}\cong\big(A_{4}\ast_{C_{3}}S_{3}\big)\ast_{\PSL_{2}(\Z)}\big(S'_{3}\ast_{C_{2}}D_{2}\big),$$ (where for a group $G$ the notation $G'$ means just an isomorphic copy of $G$)
with $\PSL_{2}(\Z)=C'_{3}\ast C'_{2}$ and the intersections $A_{4}\cap S'_{3}=C'_{3}$, $A_{4}\cap D_{2}=\{1\}$, $S_{3}\cap S'_{3}=\{1\}$, and $S_{3}\cap D_{2}=C'_{2}$. In fact, we can obtain the presentation
$$\Gamma_{1}=\langle\; \mathbf{a,b,c,d} \;|\; \mathbf{a}^{3}=\mathbf{b}^{2}=\mathbf{c}^{3}=\mathbf{d}^{2}=(\mathbf{ac})^{2}=(\mathbf{ad})^{2}=(\mathbf{bd})^{2}=(\mathbf{bc})^{2}=1\;\rangle,$$
with $A_4=\langle \mathbf{a,c}\rangle$, $S_3=\langle \mathbf{a,d}\rangle$, $D_2=\langle \mathbf{b,d}\rangle$, and $S_3'=\langle \mathbf{b,c}\rangle$, so that $C_3=\langle \mathbf{a}\rangle$, $C_2=\langle \mathbf{b}\rangle$, $C'_3=\langle \mathbf{c}\rangle$, and $C'_2=\langle \mathbf{d}\rangle$, 
and explicit matrices that represent the generators, namely 
$$\mathbf{a}=\begin{pmatrix} 0 & i \\ i & 1\end{pmatrix},\;\; \mathbf{b}=\begin{pmatrix} 0 & i \\ i & 0\end{pmatrix},\;\; \mathbf{c}=\begin{pmatrix} 1 & 1 \\ -1 & 0\end{pmatrix},\;\;\mbox{and }\;\; \mathbf{d}=\begin{pmatrix} 0 & -1 \\ 1 & 0\end{pmatrix}.$$

\subsection{Classifying space for proper actions}

Using the above presentation and explicit generators, we can construct a 2-dimensional $\Gamma_1$-CW-complex $X$, which is a model for $\uE\Gamma_1$. Let 
$$X^{(0)}=\Gamma_{1}/A_{4}\times\{p\}\;\bigsqcup\; \Gamma_{1}/S_{3}\times\{q\}\;\bigsqcup\; \Gamma_{1}/D_{2}\times\{r\}\;\bigsqcup\; \Gamma_{1}/S'_{3}\times\{s\},$$
where each $\mathbb{D}^0$ (point) has been labeled with a letter. The $1$-skeleton is obtained from the pushout
$$\begin{tikzpicture}[scale=1, decoration={markings, mark=at position 1 with {\arrow{stealth}}}]
\node at (0,2) {$\Gamma_{1}/C_{3}\times\mathbb{S}^0\,\bigsqcup\, \Gamma_{1}/C'_{2}\times\mathbb{S}^0\,\bigsqcup\, \Gamma_{1}/C_{2}\times\mathbb{S}^0\,\bigsqcup\, \Gamma_{1}/C'_{3}\times\mathbb{S}^0$};
\node at (8,2.1) {$X^{(0)}$};
\node at (0,0) {$\Gamma_{1}/C_{3}\times\mathbb{D}^1\,\bigsqcup\, \Gamma_{1}/C'_{2}\times\mathbb{D}^1\,\bigsqcup\, \Gamma_{1}/C_{2}\times\mathbb{D}^1\,\bigsqcup\, \Gamma_{1}/C'_{3}\times\mathbb{D}^1$};
\node at (8,0.1) {$X^{(1)}$};
\draw[postaction={decorate}] (5.2,2) -- (7.3,2);
\node at (6,2.3) {$\varphi$};
\draw[postaction={decorate}] (0,1.5) -- (0,0.6);
\node at (-0.8,1.1) {\footnotesize inclusion};
\draw[postaction={decorate}] (8,1.5) -- (8,0.6);
\draw[postaction={decorate}] (5.2,0) -- (7.3,0);
\end{tikzpicture}$$
so that $X^{(1)}$ is the union of $X^{(0)}$ and many copies of $\mathbb{D}^1$, identifying the image by $\varphi$ and the inclusion, respectively, of many copies of $\mathbb{S}^0$. Writing each copy of $\mathbb{S}^0$ as two ordered points $\{-1,1\}$ and denoting a point in $X^{(0)}$ just as the coset, the map $\varphi$ is defined as follows. For any $\gamma\in\Gamma_1$, 
\begin{eqnarray*}
	\varphi:& \gamma C_{3}\times\{-1,1\}\mapsto \{\gamma A_{4}\,,\,\gamma S_{3}\},\\
	&\gamma C'_{2}\times\{-1,1\}\mapsto \{\gamma S_{3}\,,\,\gamma D_{2}\},\\
	&\gamma C_{2}\times\{-1,1\}\mapsto \{\gamma D_{2}\,,\,\gamma S'_{3}\},\\
	&\gamma C'_{3}\times\{-1,1\}\mapsto \{\gamma S'_{3}\,,\,\gamma A_{4}\}.
\end{eqnarray*}

This means that we will add a segment between two points whenever their corresponding cosets intersect as a coset of any of the cyclic groups in $\Gamma_1$. Take $P$, $Q$, $R$, $S$ as the trivial cosets of $A_4$, $S_3$, $D_2$, $S'_3$, respectively. The space $X^{(1)}$ would begin to look like this:

$$\begin{tikzpicture}[scale=1, decoration={markings, mark=at position 1 with {\arrow{stealth}}}]
\node at (0,2) {$\bullet$};
\node at (0.3,1.7) {\footnotesize $P$};
\node at (2,2) {$\bullet$};
\node at (1.7,1.7) {\footnotesize $Q$};
\node at (0,0) {$\bullet$};
\node at (0.3,0.3) {\footnotesize $S$};
\node at (2,0) {$\bullet$};
\node at (1.7,0.3) {\footnotesize $R$};
\draw (0,2) -- (2,2);
\draw (0,0) -- (0,2);
\draw (2,2) -- (2,0);
\draw (0,0) -- (2,0);

\draw (0,2) -- (0.517,3.932);
\node at (0.517,3.932) {$\bullet$};
\node at (0.7,4.3) {\footnotesize $Q_1$};
\draw (0,2) -- (0,4);
\node at (0,4) {$\bullet$};
\node at (0,4.4) {\footnotesize $Q_2$};
\draw (0,2) -- (-0.517,3.932);
\node at (-0.517,3.932) {$\bullet$};
\node at (-0.7,4.3) {\footnotesize $Q_3$};

\draw (2,2) -- (2,4);
\node at (2,4) {$\bullet$};
\node at (2,4.4) {\footnotesize $P'_1$};

\draw (0,2) -- (-1.932,1.482);
\node at (-1.932,1.482) {$\bullet$};
\node at (-2.3,1.3) {\footnotesize $S'_1$};
\draw (0,2) -- (-2,2);
\node at (-2,2) {$\bullet$};
\node at (-2.4,2) {\footnotesize $S'_2$};
\draw (0,2) -- (-1.932,2.518);
\node at (-1.932,2.518) {$\bullet$};
\node at (-2.3,2.7) {\footnotesize $S'_3$};

\draw (0,0) -- (-2,0);
\node at (-2,0) {$\bullet$};
\node at (-2.4,0) {\footnotesize $P_1$};

\draw (2,2) -- (3.932,1.482);
\node at (3.932,1.482) {$\bullet$};
\node at (4.3,1.3) {\footnotesize $R_1$};
\draw (2,2) -- (4,2);
\node at (4,2) {$\bullet$};
\node at (4.4,2) {\footnotesize $R_2$};

\draw (2,0) -- (4,0);
\node at (4,0) {$\bullet$};
\node at (4.4,0) {\footnotesize $Q'_1$};

\draw (0,0) -- (0.517,-1.932);
\node at (0.517,-1.932) {$\bullet$};
\node at (0.7,-2.3) {\footnotesize $R'_1$};
\draw (0,0) -- (0,-2);
\node at (0,-2) {$\bullet$};
\node at (0,-2.4) {\footnotesize $R'_2$};

\draw (2,0) -- (2,-2);
\node at (2,-2) {$\bullet$};
\node at (2,-2.4) {\footnotesize $S_1$};
\end{tikzpicture}$$

The lines $PQ_i$, $i=1,2,3$, come from the cosets $\mathbf{c}C_3$, $\mathbf{c}^2 C_3$, and $\mathbf{ac}^2 C_3$, re\-spec\-tive\-ly. There are no more cosets of $S_3$ connected to $A_4$. It continues similarly.

Finally, we add a 2-cell, filling the square: 
$$\begin{tikzpicture}[scale=1, decoration={markings, mark=at position 1 with {\arrow{stealth}}}]
\node at (0,2) {$\Gamma_1/\{1\}\times \mathbb{S}^{1}$};
\node at (5,2) {$X^{(1)}$};
\node at (0,0) {$\Gamma_1/\{1\}\times \mathbb{D}^{2}$};
\node at (5.5,0) {$X^{(2)}=X$};
\draw[postaction={decorate}] (1.3,2) -- (4.4,2);
\draw[postaction={decorate}] (0,1.6) -- (0,0.5);
\draw[postaction={decorate}] (5,1.6) -- (5,0.5);
\draw[postaction={decorate}] (1.3,0) -- (4.5,0);
\end{tikzpicture}.$$

This space is proper since all the isotropy groups are finite groups, and this is because $X$ can be thought as the space obtained from a square by the action of $\Gamma_1$ with the isotropy groups showed below.
$$\begin{tikzpicture}[scale=1, decoration={markings, mark=at position 1 with {\arrow{stealth}}}]
\draw[fill=gray!15] (0,0) rectangle (2,2);

\node at (0,2) {$\bullet$};
\node at (-0.3,2.3) {\small $A_4$};
\node at (2,2) {$\bullet$};
\node at (2.3,2.3) {\small $S_3$};
\node at (0,0) {$\bullet$};
\node at (-0.3,-0.3) {\small $S'_3$};
\node at (2,0) {$\bullet$};
\node at (2.3,-0.3) {\small $D_2$};
\draw (0,2) -- (2,2);
\node at (1,2.25) {\footnotesize $C_3$};
\draw (0,0) -- (0,2);
\node at (-0.3,1) {\footnotesize $C'_3$};
\draw (2,2) -- (2,0);
\node at (2.3,1) {\footnotesize $C'_2$};
\draw (0,0) -- (2,0);
\node at (1,-0.25) {\footnotesize $C_2$};
\end{tikzpicture}$$

Also, $X$ is indeed a model for $\underline{E}\Gamma_1$, since every fixed space $X^H$, $H$ finite subgroup of $\Gamma_1$, is weakly contractible.

An alternative description of this space can be found in \cite{Floge83} and \cite{Rahm10}.

\subsection{Bredon cohomology for $\Gamma_{1}$}

In order to compute the Hecke operators $T_{g,\pt}$ we start computing the Bredon (co)homology groups of $\Gamma_{1}$ with coefficients in the representation ring. We will later use these groups together with  Thm. \ref{bredon} to obtain computations in equivariant K-homology.

The Bredon cochain complex with coefficients in the representation ring \ref{bredonc} for the group $\Gamma_{1}$ and the space $X=\underline{E}\Gamma_1$ has the form
$$0\longrightarrow \bigoplus_{\substack{ \alpha \\ 0\text{-cells} }} R(S_{\alpha}) \xrightarrow{\;d^0\;} \bigoplus_{\substack{ \alpha \\ 1\text{-cells} }} R(S_{\alpha}) \xrightarrow{\;d^1\;} \bigoplus_{\substack{ \alpha \\ 2\text{-cells} }} R(S_{\alpha}) \longrightarrow0,$$
where the sum runs over representatives of $n$-cells, the $S_\alpha$ are the corresponding stabilizers, and the differentials are given by restriction of representations. We know that 
$$R(A_{4})\cong\Z^4,\quad R(S_3)\cong R(S'_{3})\cong \Z^3,\quad R(D_2)\cong\Z^4,\quad\mbox{and}\quad R(C_n)\cong\Z^n,$$
so the cochain complex becomes 
$$0\longrightarrow \Z^{4+3+4+3} \xrightarrow{\quad d^0\quad} \Z^{3+2+2+3} \xrightarrow{\quad d^1\quad} \Z \longrightarrow0.$$
Here, $d^1$ is represented by the matrix $(\;1\;1\;1\;1\;1\;1\;1\;1\;1\;1\;),$ of rank $1$, and $d^0$ by the matrix
$$\left(\begin{array}{cccc:ccc:cccc:ccc}
-1 &  0 &  0 & -1 &  1 &  1 &  0 &    &    &    &    &    &    &    \\
0 & -1 &  0 & -1 &  0 &  0 &  1 &    &    &    &    &    &    &    \\
0 &  0 & -1 & -1 &  0 &  0 &  1 &    &    &    &    &    &    &    \\ \hdashline
&    &    &    & -1 &  0 & -1 &  1 &  1 &  0 &  0 &    &    &    \\
&    &    &    &  0 & -1 & -1 &  0 &  0 &  1 &  1 &    &    &    \\ \hdashline
&    &    &    &    &    &    & -1 &  0 & -1 &  0 &  1 &  0 &  1 \\
&    &    &    &    &    &    &  0 & -1 &  0 & -1 &  0 &  1 &  1 \\ \hdashline
1 &  0 &  0 &  1 &    &    &    &    &    &    &    & -1 & -1 &  0 \\
0 &  0 &  1 &  1 &    &    &    &    &    &    &    &  0 &  0 & -1 \\
0 &  1 &  0 &  1 &    &    &    &    &    &    &    &  0 &  0 & -1
\end{array}\right),$$
of rank $8$ (blank spaces stand for blocks of zeros).

We obtain 
$$\mathcal{H}_{\Gamma_1}^{n}(X;\mathcal{R}) \cong \begin{cases}
\Z^6, & n=0; \\
\Z, & n=1; \\
0, & n\geq2.
\end{cases}$$\\

Bredon homology is computed in a similar way. We have 
$$\mathcal{H}_n^{\Gamma_1}(X;\mathcal{R}) \cong \begin{cases}
\Z^6, & n=0; \\
\Z, & n=1; \\
0, & n\geq2.
\end{cases}$$

\subsection{Hecke correspondences  for prime level congruence subgroups}

We study now Hecke operators coming from congruence subgroups associated to primes in $\Z[i]$. We begin by reviewing these first. 

The group of units of the ring  of Gaussian integers $\Z[i]$ is given by $\{1, -1, i, -i \}$. Any prime ideal in 
$\Z[i]$ is of the form $(\pi)$ where $\pi$ is an irreducible element of $\Z[i]$. Up to units, these prime elements are the following: 
\begin{itemize}
	\item $\pi = 1 + i$, 
	\item $\pi = p$ for a prime $p$ in $\Z$ with $p \equiv 3 \mod 4$, 
	\item $ \pi =  a+ib$, with $a^2+b^2=p$ for a prime $p$ in $\Z$ with $p \equiv 1 \mod 4$.
\end{itemize}

Notice that $(2) = (1 + i)^2$ and that this is the only prime which ramifies in $\Z[i]$. This leads to exceptional behavior of the prime $1+i$. One instance  of this exceptional behavior will be seen in the computations for the classifying spaces for proper actions and the isotropy groups arising in the corresponding Hecke operators.\\

Fix now a prime $p$ in $\Z[i]$. Let $g$ be the class 
of
\begin{eqnarray*}
\begin{pmatrix} p & 0 \\ 0 & 1\end{pmatrix}
\end{eqnarray*} 
in $\PGL_2\left(\Q(i)\right)	=   \mathrm{Comm}_{\PGL_{2}(\mathbb{C})}(	\Gamma_{1} )$. 

As above, cf. \ref{HeckeCor}, in order to define the Hecke operator $T_g$ associated to the double coset $\Gamma_1 g \Gamma_1$, we have to consider the Hecke correspondence determined by the congruence subgroup 
\begin{eqnarray*}
K & = & \Gamma_1 \cap g^{-1}\Gamma_1 g.
\end{eqnarray*}

We start describing explicitly this subgroup. For this, notice that if $\gamma$ is the class of a matrix $\begin{pmatrix} a & b \\ c & d\end{pmatrix}$ in $\PGL_{2}(\mathbb{C})$ 
 we have
$$g^{-1}\gamma g=\begin{pmatrix} 1/p & 0 \\ 0 & 1\end{pmatrix}\begin{pmatrix} a & b \\ c & d\end{pmatrix}\begin{pmatrix} p & 0 \\ 0 & 1\end{pmatrix} = \begin{pmatrix} a & b/p \\ pc & d\end{pmatrix}.$$
This means that (the class of) any matrix in $K$ will be of this form. Then 
$$K=\left\{\begin{pmatrix} a & b \\ c & d\end{pmatrix}\in\Gamma_1 \quad:\quad c\in p\cdot \Z[i] \right\},$$
so $K$ is the congruence subgroup
$$K=\left\{\gamma\in\Gamma_1 \quad:\quad \gamma\equiv\begin{pmatrix} \cdot & \cdot \\ 0 & \cdot\end{pmatrix} \mbox{ mod }p \right\}.$$\\

To compute the index of $K$ in $\Gamma_1$ we use the following lemma. Here the notation $\widetilde{\text{P}}\SL_2(\mathbb{F})$ stands for $\SL_2(\mathbb{F})/\{\pm I\}$ (usually without the tilde it would be the quotient by all the center of the group).

\begin{lemma}
For a field with $q$ elements, $\mathbb{F}_q$, the order of the group $\widetilde{\emph{P}}\SL_2(\mathbb{F}_q)$ is $q(q^2-1)$ if $q$ is even and $q(q^2-1)/2$ if $q$ is odd.
\end{lemma}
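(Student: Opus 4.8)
The plan is to first pin down $|\SL_2(\F_q)|$ and then divide by the order of the subgroup $\{\pm I\}$, which depends on the parity of $q$.

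First I would compute $|\GL_2(\F_q)|$ by counting ordered bases of the vector space $\F_q^2$: the first column of an invertible matrix can be any nonzero vector, giving $q^2-1$ choices, and the second column can be any vector not in the line spanned by the first, giving $q^2-q$ choices. Hence $|\GL_2(\F_q)| = (q^2-1)(q^2-q)$. Next, the determinant defines a group homomorphism $\det : \GL_2(\F_q) \to \F_q^\times$ which is surjective (e.g. $\mathrm{diag}(t,1) \mapsto t$) with kernel $\SL_2(\F_q)$, so
$$|\SL_2(\F_q)| \;=\; \frac{|\GL_2(\F_q)|}{|\F_q^\times|} \;=\; \frac{(q^2-1)(q^2-q)}{q-1} \;=\; q(q^2-1).$$

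It remains to quotient by $\{\pm I\}$. Since $\det(-I) = 1$, the matrix $-I$ lies in $\SL_2(\F_q)$, so $\{\pm I\}$ is genuinely a subgroup of $\SL_2(\F_q)$. If $q$ is even, then $\F_q$ has characteristic $2$, so $-1 = 1$, the subgroup $\{\pm I\}$ is trivial, and $\widetilde{\mathrm{P}}\SL_2(\F_q) = \SL_2(\F_q)$ has order $q(q^2-1)$. If $q$ is odd, then $-1 \ne 1$ in $\F_q$, so $-I \ne I$ and $\{\pm I\}$ has order $2$; by Lagrange's theorem the quotient has order $q(q^2-1)/2$. This gives exactly the claimed formula.

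There is no real obstacle here; the only point requiring a moment's care is the case split on the parity of $q$, i.e. observing that $-I = I$ precisely when $\mathrm{char}(\F_q) = 2$, which is what makes the factor of $2$ appear only in the odd case. Everything else is a standard counting argument together with the first isomorphism theorem applied to $\det$.
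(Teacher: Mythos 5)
Your proof is correct and follows essentially the same route as the paper: count $|\GL_2(\mathbb{F}_q)|=(q^2-1)(q^2-q)$ via ordered bases, use the surjective determinant to get $|\SL_2(\mathbb{F}_q)|=q(q^2-1)$, and then divide by $|\{\pm I\}|$, which is $1$ in characteristic $2$ and $2$ otherwise. No differences worth noting.
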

\begin{proof}
	The group $\SL_2(\mathbb{F}_q)$ is the kernel of the surjective homomorphism 
	$$\mbox{det}:\GL_2(\mathbb{F}_q)\longrightarrow \mathbb{F}_q^\ast,\quad \mbox{so} \quad |\SL_2(\mathbb{F}_q)|=|\GL_2(\mathbb{F}_q)|/|\mathbb{F}_q^\ast|=|\GL_2(\mathbb{F}_q)|/(q-1).$$
	
	The order of $\GL_2(\mathbb{F}_q)$ is equal to the number of bases for $\mathbb{F}_q^2$ over $\mathbb{F}_q$ (there is a non-singular matrix for every pair of linearly independent vectors in $\mathbb{F}_q^2$), which is equal to the number of non-zero vectors in $\mathbb{F}_q^2$ times the number of vectors which are not a multiple of the first one, that is $(q^2 -1)(q^2 -q)$.
	
	Then, $|\SL_2(\mathbb{F}_q)|=q(q^2 -1)$. Finally, since $\widetilde{\text{P}}\SL_2(\mathbb{F}_q)=\SL_2(\mathbb{F}_q)/\{\pm I\}$, we divide by $2$ when $q$ is odd and we do not when $q$ is even, because the characteristic of $\mathbb{F}_q$ is $2$, so $I=-I$.
\end{proof}

It is known that the quotient $\Z[i]/p$ is a field and is isomorphic to $\mathbb{F}_{|p|}$, where $|p|$ is the norm of $p$ in $\Z[i]$ (the square of its absolute value as a complex number).\\

Consider the surjective homomorphism 
$$\pi:\Gamma_1=\PSL_2(\Z[i])\longrightarrow \widetilde{\text{P}}\SL_2(\Z[i]/p).$$
The kernel of $\pi$ is the group of matrices that are the identity modulo $p$; the index of this subgroup in $\Gamma_1$ is equal to the size of $\widetilde{\text{P}}\SL_2(\mathbb{F}_{|p|})$. Since Ker$(\pi)$ is contained in the group $K$, we have 
$$(\Gamma_1:K)=\dfrac{(\Gamma_1:\mbox{Ker}(\pi))}{(K:\mbox{Ker}(\pi))} = \dfrac{|\widetilde{\text{P}}\SL_2(\mathbb{F}_{|p|})|}{(K:\mbox{Ker}(\pi))}.$$
Besides, the index $(K:\mbox{Ker}(\pi))$ is equal to the size of the quotient group $$K\,/\,\mbox{Ker}(\pi)\cong \left\{\begin{pmatrix} a & b \\ 0 & a^{-1}\end{pmatrix}\in\widetilde{\text{P}}\SL_2(\mathbb{F}_{|p|})\right\},$$
and the size of this group is $|p|(|p|-1)$, if $|p|$ is even, or $|p|(|p|-1)/2$, if $|p|$ is odd. (As before, we do not divide by $2$ when $|p|$ is even because $\mathbb{F}_{|p|}$ has characteristic $2$.)

Thus, we obtain that $$(\Gamma_1:K)=\dfrac{|p|(|p|^2 -1)}{|p|(|p|-1)}=|p|+1.$$

Furthermore, we can give (left and right) coset representatives for $\Gamma_1$ modulo $K$. There are $|p|$ cosets represented by the matrices $$\gamma_{z}=\begin{pmatrix} 1 & 0 \\ z & 1\end{pmatrix}, \quad \mbox{with $z$ as representatives of} \quad \Z[i]/p \cong \mathbb{F}_{|p|},$$
and the last is given by the matrix $\sigma=\begin{pmatrix} 0 & -1 \\ 1 & 0\end{pmatrix}$. \\

\subsubsection{Bredon (co)homology of the congruence subgroup $K$}

Now, we will compute the Bredon (co)homology of $\underbar{E}K$. First, note that since $K$ is a subgroup of $\Gamma_1$, we can think of $X=\underline{E}\Gamma_1$ as a model for $\underline{E}K$. Then, we need $K$-orbit representatives for $n$-cells in $X$. We can start from the right coset partition 
$$\Gamma_1 = \bigsqcup_{K\gamma\,\in\, K\backslash\Gamma_1} K\gamma.$$ 
Note that for any cell $e\subset X$, the $\Gamma_1$-orbit of $e$ splits into the union of some $K$-orbits, 
$$\Gamma_1\cdot e = \bigcup_{K\gamma\,\in\, K\backslash\Gamma_1} K\gamma\cdot e,$$
and, after omitting repetitions, the union would be disjoint (apart from the boundaries). To count these repetitions, it is sufficient to find if there exists any $k\in K$ such that 
$$\gamma^{-1}k\,\gamma' \in\mbox{Stab}_{\Gamma_1}(e) \quad \mbox{for two distinct representatives $\gamma$, $\gamma'$ of $K\backslash\Gamma_1$},$$
in which case we would know that the $K$-orbits of $\gamma e$ and $\gamma'e$ are the same.\\

In the following paragraphs we focus on the case $p=1+i$, carrying out in full the corresponding computations. For other primes, an algorithm in GAP has been developed by the first author (cf. \cite{Munoz20}). 
We have that $(\Gamma_1:K)=|1+i|+1=3$ and 
$$\Gamma_1 = K\gamma_0 \;\sqcup\; K\gamma_1 \;\sqcup\; K\sigma = K \begin{pmatrix} 1 & 0 \\ 0 & 1\end{pmatrix} \;\sqcup\; K \begin{pmatrix} 1 & 0 \\ 1 & 1\end{pmatrix} \;\sqcup\; K \begin{pmatrix} 0 & -1 \\ 1 & 0\end{pmatrix}.$$
First we search for repeated $K$-orbits. These are all we need:
$$\begin{aligned}
\gamma_0^{-1} \begin{pmatrix} -i & i \\ i-1 & 1\end{pmatrix} \gamma_1 = \begin{pmatrix} 0 & i \\ i & 1\end{pmatrix} &= \; \mathbf{a}, & 
\qquad \sigma^{-1} \begin{pmatrix} i & 1 \\ 0 & -i\end{pmatrix} \gamma_0 = \begin{pmatrix} 0 & i \\ i & 1\end{pmatrix} &= \; \mathbf{a}, \\ 
\gamma_0^{-1} \begin{pmatrix} 1 & -1 \\ 0 & 1\end{pmatrix} \gamma_1 = \begin{pmatrix} 0 & -1 \\ 1 & 1\end{pmatrix} &= \; \mathbf{c}^2, & 
\sigma^{-1} \begin{pmatrix} 1 & 0 \\ 0 & 1\end{pmatrix} \gamma_1 = \begin{pmatrix} 1 & 1 \\ -1 & 0\end{pmatrix} &= \; \mathbf{c}, \\ 
\sigma^{-1} \begin{pmatrix} i & 0 \\ 0 & -i\end{pmatrix} \gamma_0 = \begin{pmatrix} 0 & i \\ i & 0\end{pmatrix} &= \; \mathbf{b}, & 
\sigma^{-1} \begin{pmatrix} 1 & 0 \\ 0 & 1\end{pmatrix} \gamma_0 = \begin{pmatrix} 0 & -1 \\ 1 & 0\end{pmatrix} &= \; \mathbf{d}, \\ 
\gamma_0^{-1} \begin{pmatrix} i & 1 \\ 0 & -i\end{pmatrix} \gamma_1 = \begin{pmatrix} 1+i & 1 \\ -i & -i\end{pmatrix} &= \; \mathbf{a}^2\mathbf{c}, & 
\sigma^{-1} \begin{pmatrix} i & i \\ 1+i & 1\end{pmatrix} \gamma_0 = \begin{pmatrix} 1+i & 1 \\ -i & -i\end{pmatrix} &= \; \mathbf{a}^2\mathbf{c}, \\ 
\gamma_0^{-1} \begin{pmatrix} i & 0 \\ 1+i & -i\end{pmatrix} \gamma_1 = \begin{pmatrix} i & 0 \\ 1 & -i\end{pmatrix} &= \; \mathbf{ad}, & 
\sigma^{-1} \begin{pmatrix} -i & i \\ i-1 & 1\end{pmatrix} \gamma_1 = \begin{pmatrix} i & 1 \\ 0 & -i\end{pmatrix} &= \; \mathbf{a}^2\mathbf{d}, \\ 
\gamma_0^{-1} \begin{pmatrix} -i & 0 \\ 0 & i\end{pmatrix} \gamma_1 = \begin{pmatrix} -i & 0 \\ i & i\end{pmatrix} &= \; \mathbf{bc}, & 
\sigma^{-1} \begin{pmatrix} -i & i \\ 0 & i\end{pmatrix} \gamma_1 = \begin{pmatrix} i & i \\ 0 & -i\end{pmatrix} &= \; \mathbf{b}\mathbf{c}^2.
\end{aligned}$$\\

For $0$-cells we have 
$$\Gamma_1 \cdot P = K \cdot P, \quad \Gamma_1 \cdot Q = K \cdot Q, \quad \Gamma_1 \cdot S = K \cdot S, \quad \mbox{and}$$
$$\Gamma_1 \cdot R = K \cdot R \;\;\sqcup\;\; K\gamma_1 \cdot R = K \cdot R \;\;\sqcup\;\; K \cdot \widetilde{R},$$ 
with $\widetilde{R}=\gamma_1 R$. For $1$-cells we have 
$$\Gamma_1 \cdot PQ = K \cdot PQ, \quad \Gamma_1 \cdot QR = K \cdot QR \;\;\sqcup\;\; K \cdot Q\widetilde{R}, $$
$$\Gamma_1 \cdot RS = K \cdot RS \;\;\sqcup\;\; K \cdot \widetilde{R}S, \quad \mbox{and} \quad  \Gamma_1 \cdot SP = K \cdot SP,$$ 
with $Q\widetilde{R}=\gamma_1 QR$ and $\widetilde{R}S = \gamma_1 RS$. Finally, the orbit of the $2$-cell is not repeated, so there are three $2$-cells in the quotient $X/K$. Let $E$ be the 2-cell $PQRS$.

The quotient space $X/K$ would look like this:
$$\begin{tikzpicture}[scale=1, decoration={markings, mark=at position 1 with {\arrow{stealth}}}]

\node at (0,2) {$\bullet$};
\node at (-0.3,2.3) {\small $P$};
\node at (2,2) {$\bullet$};
\node at (2.3,2.3) {\small $Q$};
\node at (0,0) {$\bullet$};
\node at (-0.3,-0.3) {\small $S$};
\node at (2,0) {$\bullet$};
\node at (2.2,-0.2) {\small $R$};
\draw (0,2) -- (2,2);
\draw (0,0) -- (0,2);
\draw (2,2) -- (2,0);
\draw (0,0) -- (2,0);

\node at (2.6,-0.6) {$\bullet$};
\node at (2.9,-0.9) {\small $\widetilde{R}$};
\draw (2,2) -- (2.6,-0.6);
\draw (0,0) -- (2.6,-0.6);

\node at (7,1.3) {\small With two 2-cells $E$ and $\sigma E$ with the same};
\node at (7,0.8) {\small boundary, and one other 2-cell $\gamma_1 E$.};

\end{tikzpicture}$$

The stabilizer of each orbit representative is the intersection between the stabilizer in $\Gamma_1$ and the subgroup $K$, so
$$\mbox{Stab}_K(P)=A_4\cap K=\langle\mathbf{ac,ca,ac}^2\mathbf{a}\rangle\cong D_2,$$
$$\mbox{Stab}_K(Q)=S_3\cap K=\langle\mathbf{a}^2\mathbf{d}\rangle\cong C_2, \quad \mbox{Stab}_K(R)=D_2\cap K=\langle\mathbf{bd}\rangle\cong C_2,$$
$$\mbox{Stab}_K(S)=S'_3\cap K=\langle\mathbf{bc}^2\rangle\cong C_2,$$ 
and 
$$\mbox{Stab}_K(\widetilde{R})=\mbox{Stab}_{\Gamma_1}(\gamma_1 R)\cap K=\left(\gamma_1\,\mbox{Stab}_{\Gamma_1}(R)\,\gamma_1^{-1}\right)\cap K$$
$$=\left(\gamma_1\, \langle\bfb,\bfd,\bfb\bfd\rangle \,\gamma_1^{-1}\right)\cap K=\gamma_1\, \langle\bfb,\bfd,\bfb\bfd\rangle \,\gamma_1^{-1}\cong D_2.$$
The remaining stabilizers are trivial, except the following two.
$$\mbox{Stab}_K(\widetilde{R}S)=\left(\gamma_1\,\mbox{Stab}_{\Gamma_1}(RS)\,\gamma_1^{-1}\right)\cap K=\langle\gamma_1\,\bfb\,\gamma_1^{-1}\rangle\cap K\cong C_2$$
$$\mbox{Stab}_K(Q\widetilde{R})=\left(\gamma_1\,\mbox{Stab}_{\Gamma_1}(QR)\,\gamma_1^{-1}\right)\cap K=\langle\gamma_1\,\bfd\,\gamma_1^{-1}\rangle\cap K\cong C_2$$

The cochain complex becomes 
$$0\longrightarrow \Z^{4+2+2+4+2}=\Z^{14} \xrightarrow{\quad d^0\quad} \Z^{1+1+2+1+2+1}=\Z^{8} \xrightarrow{\quad d^1\quad} \Z^{3} \longrightarrow0.$$
Here, $d^0$ is represented by the matrix
$$\left(\begin{array}{l:cccc:cc:cc:cccc:cc}
& P &   &   &   & Q &   & R &   & \widetilde{R} &   &   &   & S &   \\ \hdashline
PQ & -1 & -1 & -1 & -1 
&  1 &  1 
&    &    
&    &    &   &   
&    &    \\ \hdashline
QR & &    &    &    
& -1 & -1 
&  1 &  1 
&    &    &   &   
&    &    \\ \hdashline
& &    &    &    
& -1 & 0 
&    &   
& 1 & 1 & 0 & 0 
&    &    \\
Q\widetilde{R} & &    &    &    
& 0 & -1 
&  & 
& 0 & 0 & 1 & 1 
&  &  \\ \hdashline
RS & &    &    &    
&    &    
& -1 & -1 
&   &    &   &  
&  1 &  1 \\ \hdashline
&   &   &   &   
&    &    
&   &   
& -1 & 0 & -1 & 0 
& 1 & 0 \\
\widetilde{R}S &   &   &   &   
&    &    
&    &    
& 0 & -1 & 0 & -1 
& 0 & 1 \\ \hdashline
SP & 1 &  1 &  1 &  1 
&    &    
&    &   
&   &    &    &    
& -1 & -1
\end{array}\right),$$
of rank $6$, and $d^1$ by the matrix 
$$\left(\begin{array}{c:cccccccc}
& PQ & QR & Q\widetilde{R} &  & RS & \widetilde{R}S &  & SP \\ \hdashline
E & 1 &  1 
&  0 & 0 
&  1 
&  0 & 0 
& 1 \\
\sigma E & 1 &  1 
&  0 & 0 
&  1 
&  0 & 0 
& 1 \\
\gamma_1 E & 1 &  0 
& 1 & 1 
& 0 
& 1 & 1 
& 1
\end{array}\right),$$
of rank $2$.

We obtain 
$$\mathcal{H}_{K}^{n}(X;\mathcal{R}) \cong \begin{cases}
\Z^8, & n=0; \\
0, & n=1, \; n>2; \\
\Z, & n=2.
\end{cases}$$\\

As before, the homology is computed in a similar way. We have 
$$\mathcal{H}_n^K(X;\mathcal{R}) \cong \begin{cases}
\Z^8, & n=0; \\
0, & n=1, \; n>2; \\
\Z, & n=2.
\end{cases}$$

\subsection{The Hecke operator $T_g$ in Bredon homology via the subgroup $K$}

\subsubsection{Restriction}
As we explained above, the morphism at the level of chain complexes is given by restriction of representations of the isotropy groups of each cell. We obtain the following commutative diagram.




$$\begin{tikzpicture}[scale=1, decoration={markings, mark=at position 1 with {\arrow{stealth}}}]
\node at (-2,2) {$0$};
\draw[postaction={decorate}] (-1.7,2) -- (-0.5,2);
\node at (0,2) {$\Z$};
\draw[postaction={decorate}] (0.5,2) -- (2.5,2);
\node at (3,2) {$\Z^{10}$};
\draw[postaction={decorate}] (3.5,2) -- (5.5,2);
\node at (6,2) {$\Z^{14}$};
\draw[postaction={decorate}] (6.5,2) -- (7.7,2);
\node at (8,2) {$0$};

\node at (-2,0) {$0$};
\draw[postaction={decorate}] (-1.7,0) -- (-0.5,0);
\node at (0,0) {$\Z^{3}$};
\draw[postaction={decorate}] (0.5,0) -- (2.5,0);
\node at (3,0) {$\Z^8$};
\draw[postaction={decorate}] (3.5,0) -- (5.5,0);
\node at (6,0) {$\Z^{14}$};
\draw[postaction={decorate}] (6.5,0) -- (7.7,0);
\node at (8,0) {$0$};

\node at (1.5,2.3) {$d^{\Gamma_1}_1$};
\node at (4.5,2.3) {$d^{\Gamma_1}_0$};
\node at (1.5,0.3) {$d^{K}_1$};
\node at (4.5,0.3) {$d^{K}_0$};

\draw[postaction={decorate}] (-0.1,1.7) -- (-0.1,0.4);
\node at (-0.4,1) {$f_2$};
\draw[postaction={decorate}] (2.9,1.7) -- (2.9,0.4);
\node at (2.6,1) {$f_1$};
\draw[postaction={decorate}] (5.9,1.7) -- (5.9,0.4);
\node at (5.6,1) {$f_0$};
\end{tikzpicture}$$
where $f_0$ is represented by the matrix 
$$\left(\begin{array}{c:rrrr:rrr:rrrr:rrr}
& P & & & & Q & & & R & & & & S &  & \\ \hdashline
& 1 &  1 &  1 &  0 & & & & & & & & & \\
P & 0 &  0 &  0 &  1 & & & & & & & & & \\
& 0 &  0 &  0 &  1 & & & & & & & & & \\
& 0 &  0 &  0 &  1 & & & & & & & & & \\ \hdashline
Q & & & & &  1 &  0 &  1 & & & & & & & \\
& & & & &  0 &  1 &  1 & & & & & & & \\ \hdashline
R & & & & & & & &   1 &  0 &  0 &  1 & & & \\
& & & & & & & &  0 &  1 &  1 &  0 & & & \\ \hdashline
& & & & & & & &  1 &  0 &  0 &  0 & & & \\
\widetilde{R}& & & & & & & &  0 &  1 &  0 &  0 & & & \\
& & & & & & & &  0 &  0 &  1 &  0 & & & \\
& & & & & & & &  0 &  0 &  0 &  1 & & & \\ \hdashline
S & & & & & & & & & & & &  1 &  0 &  1 \\
& & & & & & & & & & & &  0 &  1 &  1
\end{array}\right),$$
of rank $10$, and $f_1$ by 
$$\left(\begin{array}{c:ccc:cc:cc:ccc}
& PQ & & & QR & & RS & & SP & & \\
PQ & 1 & 1 & 1 & 0 & 0 & 0 & 0 & 0 & 0 & 0 \\
QR & 0 &  0 &  0 &  1 &  1 &  0 &  0 &  0 &  0 &  0 \\
Q\widetilde{R} & 0 &  0 &  0 &  1 &  0 &  0 &  0 &  0 &  0 &  0 \\
& 0 &  0 &  0 &  0 &  1 &  0 &  0 &  0 &  0 &  0 \\
RS & 0 &  0 &  0 &  0 &  0 &  1 &  1 &  0 &  0 &  0 \\
\widetilde{R}S & 0 &  0 &  0 &  0 &  0 &  1 &  0 &  0 &  0 &  0 \\
& 0 &  0 &  0 &  0 &  0 &  0 & 1 & 0 & 0 & 0 \\
SP & 0 &  0 &  0 &  0 &  0 &  0 &  0 &  1 &  1 &  1
\end{array}\right),$$
of rank $6$.


Using these matrices, the restriction morphism 
$$\text{res}:\mathcal{H}^{\Gamma_1}_{0}(X;\mathcal{R})\cong \Z^6 \longrightarrow \mathcal{H}^{K}_{0}(X;\mathcal{R})\cong \Z^8$$
can be represented by the matrix 
\begin{equation}\label{m1}\left(\begin{array}{cccccc}
1 & 2 & 0 & 0 & 0 & 0 \\
0 & 0 & 1 & 0 & 0 & 0 \\
0 & 0 & 1 & 0 & 0 & 0 \\
0 & 0 & 1 & 0 & 0 & 0 \\
0 & 1 & 1 & 1 & 0 & 0 \\
1 & 1 & 2 &-1 & 1 &-1 \\
0 & 0 & 0 & 0 & 1 & 0 \\
0 & 0 & 0 & 0 & 0 & 1
\end{array}\right).\end{equation}


\subsubsection{Corestriction}

Let $_gK=gKg^{-1}$. 
As before, the morphism in the chain complexes is given by induction of representations of the isotropy groups of each cell. We have the morphism at the level of Bredon chain complexes as follows:
$$\begin{tikzpicture}[scale=1, decoration={markings, mark=at position 1 with {\arrow{stealth}}}]
\node at (-2,2) {$0$};
\draw[postaction={decorate}] (-1.7,2) -- (-0.5,2);
\node at (0,2) {$\Z$};
\draw[postaction={decorate}] (0.5,2) -- (2.5,2);
\node at (3,2) {$\Z^{10}$};
\draw[postaction={decorate}] (3.5,2) -- (5.5,2);
\node at (6,2) {$\Z^{14}$};
\draw[postaction={decorate}] (6.5,2) -- (7.7,2);
\node at (8,2) {$0$};

\node at (-2,0) {$0$};
\draw[postaction={decorate}] (-1.7,0) -- (-0.5,0);
\node at (0,0) {$\Z^{3}$};
\draw[postaction={decorate}] (0.5,0) -- (2.5,0);
\node at (3,0) {$\Z^8$};
\draw[postaction={decorate}] (3.5,0) -- (5.5,0);
\node at (6,0) {$\Z^{14}$};
\draw[postaction={decorate}] (6.5,0) -- (7.7,0);
\node at (8,0) {$0$};

\node at (1.5,2.4) {$d^{\Gamma_1}_1$};
\node at (4.5,2.4) {$d^{\Gamma_1}_0$};
\node at (1.5,0.4) {$d^{{}_g K}_1$};
\node at (4.5,0.4) {$d^{{}_g K}_0$};

\draw[postaction={decorate}] (-0.1,0.4) -- (-0.1,1.7);
\node at (-0.4,1) {$g_2$};
\draw[postaction={decorate}] (2.9,0.4) -- (2.9,1.7);
\node at (2.6,1) {$g_1$};
\draw[postaction={decorate}] (5.9,0.4) -- (5.9,1.7);
\node at (5.6,1) {$g_0$};
\end{tikzpicture}$$
where the $g_i$ are the transposed matrices of the $f_i$.

In a similar way, the morphism 
$$\text{cores}: \mathcal{H}^{{}_gK}_{0}(X;\mathcal{R})\cong \Z^8\longrightarrow \mathcal{H}^{\Gamma_1}_{0}(X;\mathcal{R})\cong \Z^6$$
can be represented the matrix 
\begin{equation}\label{m2}\left(\begin{matrix}
1 & 0 & 0 & 0 & 0 & 0 & 0 & 0 \\
1 & 0 & 0 & 0 & 0 & 0 & 0 & 0 \\
0 & 1 & 1 & 1 & 0 & 0 & 0 & 0 \\
0 & 0 & 0 & 0 & 1 & 0 & 0 & 0 \\
0 & 0 & 0 & 0 & 0 & 1 & 1 & 0 \\
1 & 1 & 1 & 1 & 0 & -1 & 0 & 1
\end{matrix}\right).\end{equation}

\subsubsection{$Ad_g$}

For the conjugation morphism we have
$$\begin{tikzpicture}[scale=1, decoration={markings, mark=at position 1 with {\arrow{stealth}}}]
\node at (-2,2) {$0$};
\draw[postaction={decorate}] (-1.7,2) -- (-0.5,2);
\node at (0,2) {$\Z^{3}$};
\draw[postaction={decorate}] (0.5,2) -- (2.5,2);
\node at (3,2) {$\Z^{8}$};
\draw[postaction={decorate}] (3.5,2) -- (5.5,2);
\node at (6,2) {$\Z^{14}$};
\draw[postaction={decorate}] (6.5,2) -- (7.7,2);
\node at (8,2) {$0$};

\node at (-2,0) {$0$};
\draw[postaction={decorate}] (-1.7,0) -- (-0.5,0);
\node at (0,0) {$\Z^{3}$};
\draw[postaction={decorate}] (0.5,0) -- (2.5,0);
\node at (3,0) {$\Z^8$};
\draw[postaction={decorate}] (3.5,0) -- (5.5,0);
\node at (6,0) {$\Z^{14}$};
\draw[postaction={decorate}] (6.5,0) -- (7.7,0);
\node at (8,0) {$0$};

\node at (1.5,2.3) {$d^{K}_1$};
\node at (4.5,2.3) {$d^{K}_0$};
\node at (1.5,0.3) {$d^{{}_gK}_1$};
\node at (4.5,0.3) {$d^{{}_gK}_0$};

\draw[postaction={decorate}] (-0.1,1.7) -- (-0.1,0.4);
\node at (-0.4,1) {$h_2$};
\draw[postaction={decorate}] (2.9,1.7) -- (2.9,0.4);
\node at (2.6,1) {$h_1$};
\draw[postaction={decorate}] (5.9,1.7) -- (5.9,0.4);
\node at (5.6,1) {$h_0$};
\end{tikzpicture}$$
where the $h_i$ are the identity (the conjugation is not changing the order of the representations).

\subsubsection{Hecke operators}

Finally, we can compute the Hecke operators on the K-homology of the C*-algebra $C_r^*(\Gamma_1)$.

\begin{theorem}\label{main}
We have
$$K_n(C_r^*(\Gamma_1)) \cong \begin{cases}
\Z^6, & n=0; \\
\Z, & n=1.
\end{cases}$$
The Hecke operator 
\begin{eqnarray*}
T_{g,\pt} \, : \, K_n(C_r^*(\Gamma_1))
&\longrightarrow & K_n(C_r^*(\Gamma_1))
\end{eqnarray*}
for $n=0$ is given by the matrix 
$$\left(\begin{array}{cccccc}
1 & 2 & 0 & 0 & 0 & 0 \\
1 & 2 & 0 & 0 & 0 & 0 \\
0 & 0 & 3 & 0 & 0 & 0 \\
0 & 1 & 1 & 1 & 0 & 0 \\
1 & 1 & 2 &-1 & 2 &-1 \\
0 & 1 & 1 & 1 & -1 & 2
\end{array}\right)$$ and is zero for $n=1$.
\end{theorem}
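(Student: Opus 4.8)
The plan is to combine the Baum--Connes conjecture with Theorems~\ref{comm} and~\ref{bredon}, so that the statement reduces to the Bredon-homology computations already carried out above. Since $\Gamma_1$ is a lattice in $\PGL_2(\C)$ (it has the Haagerup property), it satisfies the Baum--Connes conjecture, so the assembly map $\mu_{\Gamma_1,n}\colon K_n^{\Gamma_1}(\uE\Gamma_1)\to K_n(C_r^*(\Gamma_1))$ is an isomorphism for all $n$. Taking $X=\uE\Gamma_1$ to be the $2$-dimensional model constructed above, the split exact sequence~\eqref{eq2} together with $\cH_2^{\Gamma_1}(X;\mathcal R)=0$ gives $K_0^{\Gamma_1}(X)\cong\cH_0^{\Gamma_1}(X;\mathcal R)\cong\Z^6$, while the natural isomorphism $K_1^{\Gamma_1}(X)\cong\cH_1^{\Gamma_1}(X;\mathcal R)\cong\Z$ settles the odd degree. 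This yields the asserted values of $K_n(C_r^*(\Gamma_1))$.

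For the Hecke operator, Theorem~\ref{comm} identifies $T_{g,\pt}$ on $K_n(C_r^*(\Gamma_1))$ with $T_{g,\uE\Gamma_1}$ on $K_n^{\Gamma_1}(\uE\Gamma_1)$ via $\mu_{\Gamma_1,n}$, and Theorem~\ref{bredon}---whose exact sequence degenerates into an isomorphism since $\cH_2^{\Gamma_1}(X;\mathcal R)=0$---identifies $T_{g,\uE\Gamma_1}$ on $K_0^{\Gamma_1}(X)$ with $\mathcal T_{g,\uE\Gamma_1}$ on $\cH_0^{\Gamma_1}(X;\mathcal R)$, and on $K_1^{\Gamma_1}(X)$ with $\mathcal T_{g,\uE\Gamma_1}$ on $\cH_1^{\Gamma_1}(X;\mathcal R)$. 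Hence it suffices to compute $\mathcal T_{g,\uE\Gamma_1}=\cores\circ Ad_g\circ\res$ on $\cH_0$ and on $\cH_1$, using $X$ as a model for $\uE K$ with $K=\Gamma_1\cap g^{-1}\Gamma_1 g$.

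In degree $0$ the three factors are exactly the maps made explicit in the preceding subsections: $\res\colon\cH_0^{\Gamma_1}(X;\mathcal R)\cong\Z^6\to\cH_0^{K}(X;\mathcal R)\cong\Z^8$ is the matrix~\eqref{m1}; $Ad_g$ is the identity, since for $p=1+i$ conjugation by $g$ permutes neither the $K$-cells of $X$ nor the representations attached to their stabilizers (the chain maps $h_i$ are identity matrices); and $\cores\colon\cH_0^{{}_gK}(X;\mathcal R)\cong\Z^8\to\cH_0^{\Gamma_1}(X;\mathcal R)\cong\Z^6$ is the matrix~\eqref{m2}. Multiplying~\eqref{m2} by~\eqref{m1} produces precisely the $6\times6$ matrix in the statement, which is therefore the matrix of $T_{g,\pt}$ on $K_0(C_r^*(\Gamma_1))$. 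In degree $1$, the Bredon-homology computation for $K$ gives $\cH_1^{K}(X;\mathcal R)=0$, so $\res$ vanishes on $\cH_1^{\Gamma_1}(X;\mathcal R)$; consequently $\mathcal T_{g,\uE\Gamma_1}=0$ on $\cH_1$, and $T_{g,\pt}=0$ on $K_1(C_r^*(\Gamma_1))$.

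The substantive work all lies upstream of this final assembly, in the explicit determination of~\eqref{m1} and~\eqref{m2}, and that is the main obstacle. One must first produce $K$-orbit representatives of all cells of $X$ together with their stabilizers---which forces the search for coincidences $\gamma^{-1}k\gamma'\in\mathrm{Stab}_{\Gamma_1}(e)$ among representatives of $K\backslash\Gamma_1$, and in particular the appearance of the extra cells $\widetilde R$, $Q\widetilde R$, $\widetilde R S$ and of the three $2$-cells---then trace the restriction and induction homomorphisms between the representation rings of these stabilizers through the identifications $\cH_n^{\Gamma_1}(\Gamma_1/K\times X;\mathcal R)\cong\cH_n^{K}(X;\mathcal R)$, and finally verify compatibility with the Bredon differentials in order to read off the induced maps on the subquotients $\ker/\operatorname{im}$. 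Once~\eqref{m1} and~\eqref{m2} are in hand---and using that the chain maps realizing $\cores$ are the transposes of those realizing $\res$---the remaining computation, namely the single matrix product above together with the vanishing in odd degree, is routine.
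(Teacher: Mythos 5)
Your proposal is correct and follows essentially the same route as the paper's own proof: Baum--Connes plus Theorems~\ref{comm} and~\ref{bredon} reduce everything to the Bredon homology computations, the degree-$0$ operator is the product of the matrices~\eqref{m2} and~\eqref{m1} (with $Ad_g$ the identity), and the degree-$1$ operator vanishes because it factors through $\mathcal{H}_1^{K}(\underline{E}K;\mathcal{R})=0$. Your extra remarks (Haagerup property for Baum--Connes, the degeneration of the exact sequences since $\mathcal{H}_2^{\Gamma_1}=0$) only make explicit what the paper leaves implicit.
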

\begin{proof}
First note that as $\Gamma_1$ satisfies the Baum-Connes conjecture, the discussion at the end of Section \ref{K-homology} implies $$K_0(C_r^*(\Gamma_1))\cong \mathcal{H}_0^{\Gamma_1}(\underbar{E}\Gamma_1;\mathcal{R})\cong\Z^6 \quad\text{and}\quad K_1(C_r^*(\Gamma_1))\cong \mathcal{H}_1^{\Gamma_1}(\underbar{E}\Gamma_1;\mathcal{R})\cong\Z.$$ 
Then, Theorems \ref{comm} and \ref{bredon} imply that $T_{g,\pt}$ can be represented the same as the Hecke operator on Bredon homology. For $n=0$, it is the product of matrices \ref{m1} and \ref{m2}; for $n=1$ it is trivial, since it factors through $\mathcal{H}_1^{K}(\underbar{E}K;\mathcal{R})=0$.
\end{proof}

\section{Concluding remarks}
These computations are a first step towards developing a general algorithm for working with Hecke operators in Bredon cohomology. The implementation of such algorithm in GAP is part of forthcoming work. As mentioned above, an initial GAP algorithm to compute $\uE\Gamma_1/K$ for subgroups $K$ associated to other primes $p$ in $\Z[i]$ can be found in \cite{Munoz20}.

Relations of our work with arithmetic aspects of the theory involving automorphic forms are still to be studied. We expect that using the  above set up the study of such relations will lead to fruitful developments of the theory.


\bibliographystyle{alpha}
\bibliography{BianchiHeckeBib}

\end{document}